\DeclareMathOperator{\sgn}{sgn}
\newtheorem{thm}{Theorem}
\newtheorem{lem}[thm]{Lemma}
\newtheorem{prop}[thm]{Proposition}
\newtheorem{cor}[thm]{Corollary}
\newtheorem{rem}{Remark}
\newcommand{\C}{{\mathbb C}}
\newcommand{\R}{{\mathbb R}}
\newcommand{\cn}{\C^n}
\newcommand\CC{{\mathbb C^n}}
\newcommand\RR{{\mathbb R^n}}
\newcommand\RN{{\mathbb R^n}}
\begin{document}

\title[Integral operators on the fractional Fock-Sobolev spaces]{Boundedness criterion for  
integral operators on the fractional Fock-Sobolev spaces
  }
\author[G. Cao]{Guangfu Cao}
\address{Cao: Department of Mathematics, South China Agricultural University,
Guangzhou, Guangdong 510640, China.}
\email{guangfucao@163.com}
\author[L. He]{Li He*}
\address{He: School of Mathematics and Information Science,
Guangzhou University, Guangzhou 510006, China.}
\email{helichangsha1986@163.com}
\author[J. Li]{Ji Li}
\address{Li:  Department of Mathematics, Macquarie University, NSW, 2109, Australia}
\email{ji.li@mq.edu.au}
\author[M. Shen]{Minxing Shen}
\address{Shen: Department of Mathematics, Sun Yat-sen (Zhongshan)
University, Guangzhou, 510275, P.R. China}
\email{shenmx3@163.com}

\thanks{* Corresponding author, email: helichangsha1986@163.com}

\subjclass[2010]{30H20, 42A38, 44A15}
\keywords{ Fock-Sobolev space,   Hermite-Sobolev space,     integral operator,
Hermite operator, Bargmann transform.}

\begin{abstract}  
We provide a boundedness criterion for the integral operator $S_{\varphi}$ on the fractional Fock-Sobolev space $F^{s,2}(\cn)$, $s\geq 0$, where $S_{\varphi}$ (introduced by  Zhu \cite{Z}) is given by
\begin{eqnarray*} 
S_{\varphi}F(z):= \int_{\mathbb{C}^n} F(w) e^{z \cdot\bar{w}} \varphi(z- \bar{w}) d\lambda(w)
\end{eqnarray*}
with  $\varphi$ in the Fock space  $F^2(\CC)$ and
$d\lambda(w): = \pi^{-n} e^{-|w|^2} dw$
the Gaussian measure on the complex space  $\mathbb{C}^{n}$. This extends the recent result in \cite{CJSWY}.
The main approach is to develop  multipliers on  the fractional Hermite-Sobolev space $W_H^{s,2}(\R^n)$.
\end{abstract}
\maketitle


\section{Introduction}

Let $\mathbb{C}^{n}$ be the complex $n$-dimensional space
with the inner product 
$$
z\cdot \bar{w}=\sum_{j=1}^{n}z_{j}\overline{w_{j}},
\quad z=(z_{1},\cdots ,z_{n}),  
w=(w_{1},\cdots ,w_{n})\in \mathbb{C}^{n}$$
and modulus $|z|=(z\cdot \bar{z})^{\frac{1}{2}}$.
The Fock space ${F}^2(\mathbb{C}^{n})$ is the set of all entire functions $F$ on $\mathbb{C}^{n}$ such that the norm
$$
\|F\|_{{F}^2(\mathbb{C}^{n})}:=\left(\int_{\mathbb{C}^{n}} |F(z)|^2d\lambda(z)\right)^{1\over 2}<\infty,
$$
where
$
d\lambda(z) := \pi^{-n} e^{-|z|^2} dz
$
is the Gaussian measure on $\mathbb{C}^{n}$  (\cite{B1,B2}). 

Let $\mathbb{N}$ be  the set of positive integers  
and $\mathbb{N}_{0}=\mathbb{N}\cup \{0\}.$ For any $\alpha \in \mathbb{N}_{0}^{n}$ we define
$$
e_{\alpha }:=\frac{z^{\alpha }}{\|z^{\alpha }\|_{F^{2}}}=\frac{z^{\alpha }}{\sqrt{\alpha !}}.
$$
Then $\{e_{\alpha }|\alpha \in  \mathbb{N}_{0}^{n}\}$ is an orthonormal basis  for ${F}^2(\mathbb{C}^{n})$.
The Fock space ${F}^2(\mathbb{C}^{n})$ is a Hilbert space with the inner product inherited from $L^2(\mathbb C^n,d\lambda)$.
 The fractional Fock-Sobolev space of order $s\in\R$ is defined by 
$$
F^{s,2}(\CC):=\Big\{f=\sum_{\alpha\in\mathbb{N}_0^n}c_\alpha e_\alpha: \|f\|_{F^{s,2}(\CC)}<\infty\Big\}
$$
with the norm given by 
$$
\|f\|_{F^{s,2}(\CC)}:=\Big[\sum_{\alpha\in\mathbb{N}_{0}^{n}}(2|\alpha|+n)^{s}|c_\alpha|^2
\Big]^{\frac{1}{2}}.
$$
 The Fock-Sobolev  space is a convenient tool for many problems in functional analysis, mathematical physics, and engineering.
We refer to \cite{B1, B2, BC, F, G, Z, Z2} for an introduction. 

In \cite{Z}, Zhu introduced the following integral operator 
\begin{eqnarray}\label{e1.1}
S_{\varphi}F(z):= \int_{\mathbb{C}^n} F(w) e^{z \cdot\bar{w}} \varphi(z- \bar{w}) d\lambda(w),
\end{eqnarray} 
which recovers (or is linked to) many fundamental examples of integral operators in harmonic analysis and complex analysis with different choices of  $\varphi\in   F^2(\CC)$,
including the Riesz transform on $\mathbb R^n$ and the Ahlfors--Beurling operator on $\mathbb C$.
Thus, characterizing the boundedness of $S_\varphi$ is interesting and non-trivial. 
 In 
  \cite{CJSWY},  it was  shown  by Wick, Yan and the first, third and fourth authors
   that the  integral operator $S_\varphi$ in \eqref{e1.1}
  is bounded on $ F^2(\CC)$ if and only if there exists an $m\in L^\infty(\RR)$
such that
\begin{eqnarray}\label{e1.2}
	\varphi(z)=\left({2\over \pi}\right)^{n\over 2}\int_{\RR} m(x) e^{-2\left(x-\frac{i}{2}   z  \right)^2}dx, \ \ \ \  z\in \CC.
\end{eqnarray}
Moreover, we have that
$$ \|S_\varphi\|_{ F^2(\CC)\to F^2(\CC)} = \|m\|_{L^\infty(\mathbb R^n)}. $$

The purpose of the paper is to contine the line in  \cite{CJSWY, Z} to establish 
    a  boundedness criterion for the  integral operator
   $S_{\varphi}$ on the fractional Fock-Sobolev  space $F^{s,2}(\cn)$    by developing the 
    multipliers on  the fractional Hermite-Sobolev space $W_H^{s,2}(\R^n)$
  (see Section 3 below about the  multipliers on  the fractional Hermite-Sobolev space $W_H^{s,2}(\RR)$).
Our main result is the following.

\begin{thm}\label{th1.1}
Let  $s\geq 0$. Then the integral operator $S_{\varphi }$ is bounded on $F^{s,2}(\CC)$ if and only if
there exists  a multiplier $m$ on the space $ W_{H}^{s,2}(\RR) $ such that
$$
\varphi (z)= \left({2\over \pi}\right)^{n\over 2} \int_{\mathbb{R}^{n}}m(x)e^{-2(x-\frac{i}{2}z)^{2}}dx, \qquad z\in \mathbb{C}^{n}.
$$
In particular,  $\varphi \in  F^{s,2}(\mathbb{C}^n)$ if $S_{\varphi }$ is bounded on $F^{s,2}(\mathbb{C}^n)$.
\end{thm}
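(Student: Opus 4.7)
The plan is to transport the problem from $F^{s,2}(\CC)$ to a multiplication-operator problem on $W_H^{s,2}(\RR)$ via the Bargmann transform. Recall that the Bargmann transform $B:L^2(\RR)\to F^2(\CC)$ is a unitary isomorphism that sends the normalized Hermite basis $\{h_\alpha\}$ onto the monomial basis $\{e_\alpha\}$. Because the weights $(2|\alpha|+n)^s$ defining $F^{s,2}(\CC)$ are exactly the $s$-th powers of the eigenvalues of the Hermite operator $H=-\Delta+|x|^2$ on $h_\alpha$, the transform extends to a unitary isomorphism $B:W_H^{s,2}(\RR)\to F^{s,2}(\CC)$ for every $s\geq 0$.

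The second, and central, ingredient is the intertwining identity
\[
S_\varphi\circ B \;=\; B\circ M_m,
\]
where $M_m$ denotes pointwise multiplication by $m$ and $\varphi$ is recovered from $m$ by the Gaussian integral in the theorem. This identity is established in \cite{CJSWY} for $s=0$ and $m\in L^\infty(\RR)$; its derivation reduces to Gaussian integrals evaluated against the $h_\alpha$, and these computations remain valid whenever $m$ is a multiplier on $W_H^{s,2}(\RR)$, since $m\,h_\alpha\in W_H^{s,2}(\RR)\subset L^2(\RR)$ and all the integrals involved converge absolutely on this dense subspace.

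With the intertwining available, both directions of the theorem follow. \emph{Sufficiency:} if $m$ is a multiplier on $W_H^{s,2}(\RR)$ then $M_m$ is bounded on $W_H^{s,2}(\RR)$, so $S_\varphi=BM_mB^{-1}$ is bounded on $F^{s,2}(\CC)$. \emph{Necessity:} assume $S_\varphi$ is bounded on $F^{s,2}(\CC)$. Expanding $\varphi(z-\bar w)$ in a Taylor series about $z$ and using the orthogonality relation $\int_{\CC}\bar w^{\alpha} e^{z\cdot\bar w}\,d\lambda(w)=\delta_{\alpha,0}$ gives $S_\varphi 1=\varphi$; since $1\in F^{s,2}(\CC)$ this yields $\varphi\in F^{s,2}(\CC)$, which is the ``In particular'' clause, and therefore $\varphi\in F^2(\CC)$. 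Unitarity of the Bargmann transform then produces a unique $m\in L^2(\RR)$ realizing the Gaussian representation in the theorem, and the intertwining of the previous paragraph identifies $B^{-1}S_\varphi B$ with $M_m$ on the dense span of the $h_\alpha$. Since $B^{-1}S_\varphi B$ is bounded on $W_H^{s,2}(\RR)$ by hypothesis, $m$ is, by definition, a multiplier on $W_H^{s,2}(\RR)$.

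The principal obstacle is to set up a workable theory of multipliers on $W_H^{s,2}(\RR)$ (the subject of Section~3) and to justify the intertwining of Step~2 in that framework. In the $L^2$ case of \cite{CJSWY} the multiplier class is simply $L^\infty(\RR)$ and boundedness of $M_m$ is tautological with $\|M_m\|=\|m\|_\infty$; for $s>0$ one must first characterize which symbols act boundedly on the fractional Hermite--Sobolev space and then verify that the Gaussian integrals defining $\varphi$ continue to produce the correct intertwining on a sufficiently large class of Hermite test functions, so that both boundedness statements transfer faithfully through $B$.
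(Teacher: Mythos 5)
Your central intertwining identity is not correct, and the error is structural rather than cosmetic. You claim $S_\varphi\circ\mathcal{B}=\mathcal{B}\circ M_m$, i.e.\ that $\mathcal{B}^{-1}S_\varphi\mathcal{B}$ is the pointwise multiplication operator $M_m$. But for every $\varphi\in F^2(\CC)$ the operator $S_\varphi$ commutes with the Weyl operators $W_a$, $a\in\RN$, so $T=\mathcal{B}^{-1}S_\varphi\mathcal{B}$ commutes with all real translations $\tau_a$; a multiplication operator commuting with all translations has (a.e.) constant symbol, so your identity can only hold for constant $m$. What is actually true, and what the paper (following \cite{CJSWY}) proves, is that $T$ is a \emph{Fourier multiplier}: $\mathcal{B}^{-1}S_\varphi\mathcal{B}=\mathcal{F}^{-1}M_m\mathcal{F}$, with the Gaussian formula of the theorem relating $\varphi$ to this Fourier symbol $m$. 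The theorem's statement survives this correction only because $\mathcal{F}$ is unitary on $W_H^{s,2}(\RR)$ (Lemma~\ref{lem5}), so that $\mathcal{F}^{-1}M_m\mathcal{F}$ is bounded on $W_H^{s,2}(\RR)$ exactly when $m\in\mathcal{M}(W_H^{s,2}(\RR))$ --- a step your write-up never invokes because the Fourier transform is absent from it.

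The second gap is in the necessity direction: you assert that the CJSWY computations ``identify $\mathcal{B}^{-1}S_\varphi\mathcal{B}$ with $M_m$ on the span of the $h_\alpha$,'' but producing $m$ from a bounded $S_\varphi$ is precisely the nontrivial content, and it is not a computation against Hermite functions. The paper's route is: commutation with $W_a$ gives translation invariance of $T$ on $W_H^{s,2}(\RR)$; Proposition~\ref{thm16} (whose proof needs the convolution identity of Lemma~\ref{prop15}, density of $C_0^\infty$, and a choice of $g$ with non-vanishing $\mathcal{F}g$) then yields $m\in\mathcal{M}(W_H^{s,2}(\RR))$ with $\mathcal{F}(Tf)=m\mathcal{F}f$; Proposition~\ref{thm18} converts $\mathcal{B}\mathcal{F}^{-1}M_m\mathcal{F}\mathcal{B}^{-1}$ back into an integral operator $S_{\varphi_0}$ and, via a genuinely $s$-dependent estimate (using Lemma~\ref{lem3} and the translation bound \eqref{ppp}), shows $\varphi_0\in F^{s,2}(\CC)$; finally one must prove $\varphi=\varphi_0$, a uniqueness step you do not address (your appeal to ``a unique $m\in L^2$'' is also off: the Gaussian formula with $\varphi\in F^2$ only forces $m(x)e^{-x^2}\in L^2$, not $m\in L^2$). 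Your observation that $S_\varphi 1=\varphi$ gives the ``in particular'' clause directly is a nice shortcut (the paper instead gets $\varphi\in F^{s,2}(\CC)$ from Proposition~\ref{thm18}), but it does not repair the main argument.
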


We would like to mention  that in a recent paper 
\cite{WW},  Wick and Wu   obtained an isometry between the  Fock-Sobolev space and the Gauss-Sobolev space.
As an application, they used multipliers on the  Gauss-Sobolev space to characterize the boundedness of the integral operator  
$S_{\varphi }$  in \eqref{e1.1} on the   Fock-Sobolev spaces $F^{s,2}(\CC)$ when $s$   is a positive integer.
Note that  when  $s$ is a positive integer, our result in Theorem~\ref{th1.1} coincides with their result in \cite[Theorem 4.4]{WW}.

	The layout of the article is as follows.  In Section 2 we prove some properties of 
 the fractional Hermite-Sobolev spaces and  the fractional Fock-Sobolev spaces for proving  our main result. 
 In Section 3 we develop the  multipliers on  the fractional Hermite-Sobolev spaces. The proof of our  Theorem~\ref{th1.1}  will be 
 given in Section 4 by adapting  an argument in \cite{CJSWY} to 
 the fractional Fock-Sobolev space $F^{s,2}(\cn)$  for $s\geq 0$
 by using  multipliers on  the fractional Hermite-Sobolev space $W_H^{s,2}(\R^n)$ in Section 3.

Throughout, the letter ``$c$"  and ``$C$" will denote (possibly
different) constants  that are independent of the essential
variables.

\medskip

\section{Preliminaries}

Let $H$ be the Hermite operator (also called the harmonic oscillator) in the $n$-dimension real space
$\mathbb{R}^{n}$,    which is defined by
  \begin{eqnarray}\label{eq1}
H:=-\Delta + |x|^2 :=-\sum_{i=1}^n {\partial^2\over \partial x_i^2} + |x|^2, \quad x=(x_1, \cdots, x_n).
\end{eqnarray}
 The Hermite operator arises naturally in mathematical physics (see \cite{FH}).  
 For each non-negative integer $k$, 
 the Hermite polynomials $H_{k}$ on  $\mathbb{R}$ are defined by
 $
H_{k}=e^{x^{2}}\frac{d^{k}}{dx^{k}}\big(e^{-x^{2}}\big)   
 $
and by normalization in $L^{2}(\mathbb{R})$,  the Hermite functions
$$
h_{k}(x)=(\sqrt{\pi }2^{k}k!)^{-\frac{1}{2}}e^{-\frac{x^{2}}{2}}(-1)^{k}H_{k}(x),\qquad x\in\R.
$$  
It is not difficult to check that
\begin{equation}\label{eq2}
\left(-\frac{d^{2}}{dx^{2}}+x^{2}\right)h_k(x)=(2k+1)h_k(x).
\end{equation}

In the higher dimensions, for each multi-index
$\alpha =(\alpha_{1},\cdots ,\alpha_{n})\in \mathbb{N}_{0}^{n}$, the Hermite function
$h_{\alpha }$ on $\R^n$ is defined by
$$
h_{\alpha }(x)=\prod_{j=1}^{n}h_{\alpha_{j}}(x_{j}), \hskip 5mm x=(x_{1},\cdots , x_{n})\in \mathbb{R}^{n}.
$$
By (\ref{eq2}), we see that 
\begin{equation}\label{eq5}
Hh_{\alpha}=(2|\alpha |+n)h_{\alpha }.
\end{equation}
That is,  these $\{h_{\alpha }\}$ are eigenfunctions of the Hermite operator $H$. 
Moreover, $\{h_{\alpha }\}_{\alpha \in \mathbb{N}_{0}^{n}}$ is an orthonormal basis of
$L^{2}(\mathbb{R}^{n}).$
Note there is a constant $C>0$ such that $\|h_{\alpha }\|_{L^{\infty }(\mathbb{R}^{n})}\leq C$ for all
$\alpha \in \mathbb{N}_{0}^{n}$, and for each $m\in \mathbb{N}$, we have
$$
|\langle f, h_{\alpha }\rangle_{L^{2}(\mathbb{R}^{n})}|\leq \|H^{m}f\|_{L^{2}(\mathbb{R}^{n})}
(2|\alpha |+n)^{-m}.
$$
Hence, if $f$ is a rapidly decreasing function, then the Hermite series expansion
$$
f=\sum_{\alpha \in \mathbb{N}_{0}^{n}}\langle f, h_{\alpha }\rangle_{L^{2}(\mathbb{R}^{n})}h_{\alpha }
$$
converges to $f$ uniformly in $\mathbb{R}^{n}$, and certainly, also in $L^{2}(\mathbb{R}^{n})$.

Let $s\in \R$ and $f\in \mathscr S(\R^n)$. One defines the fractional Hermite operator $H^s$ by
$$
	H^{s}f:=\sum_{\alpha \in \mathbb{N}_{0}^{n}}(2|\alpha |+n)^{s}\langle f, h_{\alpha }
\rangle_{L^{2}(\mathbb{R}^{n})}h_{\alpha }.
$$
The fractional Hermite-Sobolev space of order $s\in\R$ is defined by
$$
	W_{H}^{s,2}(\RR):= \Big\{ f\in L^2(\RN): H^{\frac{s}{2}}f\in L^{2}(\mathbb{R}^{n}) \Big\}
$$
with the norm given by $\|f\|_{W_{H}^{s,2}(\R^n)}:= \|H^{\frac{s}{2}}f\|_{L^{2}(\mathbb{R}^{n})}$ 
(see \cite{BT, CCL}).

For $1\leq j\leq n$,  let
$$
H_{j}:=\frac{\partial }{\partial x_{j}}+x_{j}\hskip 5mm\mbox{and}\hskip 5mm H_{-j}:=H_{j}^{*}
=-\frac{\partial }{\partial x_{j}}+x_{j}.
$$
Then it is easy to check that
$$
H=\frac{1}{2}\sum_{j=1}^{n}[H_{j}H_{-j}+H_{-j}H_{j}].
$$
For any positive integer $k$ we define $\widetilde{W}_H^{k,2}(\RR)$ as the space of functions $f\in L^{2}(\mathbb{R}^{n})$
such that for every $1\leq |j_{1}|,\cdots ,|j_{m}|\leq n $ and $ 1\leq m\leq k,$
$$
H_{j_{1}}\cdots H_{j_{m}}f\in L^{2}(\mathbb{R}^{n}).
$$
The norm on $\widetilde{W}_H^{k,2}(\RR)$ is given by
$$
\|f\|_{\widetilde{W}_H^{k,2}(\RR)}:=\sum_{1\leq |j_{1}|,\cdots ,|j_{m}|\leq n, 1\leq m\leq k}\|H_{j_{1}}\cdots
H_{j_{m}}f\|_{L^{2}(\RR)}+\|f\|_{L^{2}(\RR)}.
$$

\begin{lem}[\cite{BT} Theorem 4]
For $k\in \mathbb{N}$ we have that  $\widetilde{W}_H^{k,2}(\RR)=W_{H}^{k,2}(\RR)$. Moreover,
the norms $\|\cdot \|_{\widetilde{W}_H^{k,2}(\RR)}$ and $\|\cdot \|_{W_{H}^{k,2}(\RR)}$ are equivalent.
\label{lem1}
\end{lem}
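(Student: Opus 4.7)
The plan is to reduce both norms to explicit quadratic forms in the Hermite coefficients of $f$ and then to compare them directly. The essential tool is the action of $H_j$ and $H_{-j}$ as annihilation and creation operators on the Hermite basis: from the standard recurrence relations
$$
(x_j+\partial_{x_j})h_\alpha=\sqrt{2\alpha_j}\,h_{\alpha-e_j},\qquad (x_j-\partial_{x_j})h_\alpha=\sqrt{2(\alpha_j+1)}\,h_{\alpha+e_j},
$$
with the convention $h_\beta:=0$ whenever $\beta$ has a negative component, I read off $H_j h_\alpha=\sqrt{2\alpha_j}\,h_{\alpha-e_j}$ and $H_{-j}h_\alpha=\sqrt{2(\alpha_j+1)}\,h_{\alpha+e_j}$. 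Combined with the Parseval identity for the Hermite expansion, this yields at once $\|H^{k/2}f\|_{L^2}^2=\sum_{\alpha\in\mathbb{N}_0^n}(2|\alpha|+n)^k|c_\alpha|^2$ for $f=\sum_\alpha c_\alpha h_\alpha$.

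For the bound $\|f\|_{\widetilde W_H^{k,2}}\leq C\|f\|_{W_H^{k,2}}$, I iterate the creation/annihilation identities: for a signed multi-index $(j_1,\ldots,j_m)$ with $1\leq|j_\ell|\leq n$ and $m\leq k$, the product $H_{j_1}\cdots H_{j_m}h_\alpha$ is a scalar times a single Hermite function $h_{\alpha'}$, and that scalar is a product of $m$ square roots of terms of the form $2\beta_\ell+O(1)$ where the intermediate multi-indices satisfy $|\beta|\leq|\alpha|+m$. Hence the scalar is bounded by $C(|\alpha|+1)^{m/2}$, and orthogonality of $\{h_{\alpha'}\}$ gives
$$
\|H_{j_1}\cdots H_{j_m}f\|_{L^2}^2\leq C\sum_\alpha(|\alpha|+1)^m|c_\alpha|^2\leq C\|H^{k/2}f\|_{L^2}^2,
$$
since $(|\alpha|+1)^m\leq C(2|\alpha|+n)^k$ for $m\leq k$ and $|\alpha|\geq 0$.

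For the reverse inequality, it suffices to test against the pure creation products $H_{-\ell}^k$ for $\ell=1,\ldots,n$. Iterating the creation formula gives
$$
H_{-\ell}^k h_\alpha=\Big(\prod_{i=1}^{k}\sqrt{2(\alpha_\ell+i)}\,\Big)\,h_{\alpha+ke_\ell},
$$
so that $\|H_{-\ell}^k f\|_{L^2}^2=2^k\sum_\alpha(\alpha_\ell+1)\cdots(\alpha_\ell+k)\,|c_\alpha|^2\geq 2^k\sum_\alpha\alpha_\ell^k|c_\alpha|^2$. Summing over $\ell$ and using the elementary inequality $\sum_{\ell=1}^n\alpha_\ell^k\geq n^{1-k}|\alpha|^k$ (a consequence of the power mean inequality), together with $(2|\alpha|+n)^k\leq C(|\alpha|^k+1)$, I obtain
$$
\|H^{k/2}f\|_{L^2}^2\leq C\Big(\sum_{\ell=1}^n\|H_{-\ell}^k f\|_{L^2}^2+\|f\|_{L^2}^2\Big)\leq C\|f\|_{\widetilde W_H^{k,2}}^2.
$$

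The step I expect to require the most care is the iteration for mixed products $H_{j_1}\cdots H_{j_m}$: although each individual factor has an explicit effect, the shifts in the index $\alpha$ accumulate and one must verify that the resulting product of square-root factors still obeys the clean bound $C(|\alpha|+1)^{m/2}$. Fortunately only a coarse upper bound is needed for the norm equivalence, so no delicate combinatorial identity is required; the opposite inequality then follows from the cleaner special case of the pure creation strings $H_{-\ell}^k$, whose action on $h_\alpha$ is given by the explicit product above.
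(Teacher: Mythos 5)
Your argument is correct in substance, but note that the paper does not prove this lemma at all: it is quoted from Bongioanni--Torrea \cite{BT} (Theorem 4), whose proof runs through the boundedness on $L^p$ of the Hermite--Riesz-type operators (e.g.\ $H_jH^{-1/2}$, and negative powers of $H$ against polynomial weights) together with an induction on $k$, a route that works for all $1<p<\infty$. What you propose instead is a purely $L^2$, basis-diagonalization argument: using $H_jh_\alpha=\sqrt{2\alpha_j}\,h_{\alpha-e_j}$, $H_{-j}h_\alpha=\sqrt{2(\alpha_j+1)}\,h_{\alpha+e_j}$, every string $H_{j_1}\cdots H_{j_m}$ acts on $h_\alpha$ as a fixed index shift times a scalar of size $O((|\alpha|+1)^{m/2})$, which by Parseval gives $\|f\|_{\widetilde W_H^{k,2}}\le C\|f\|_{W_H^{k,2}}$, while the pure creation strings $H_{-\ell}^k$ and the power-mean inequality $\sum_\ell\alpha_\ell^k\ge n^{1-k}|\alpha|^k$ give the converse. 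This is more elementary and self-contained than the cited proof, at the price of being tied to $p=2$. The only point you should make explicit is the justification for applying the unbounded operators termwise to the Hermite expansion: for the lower bound, pair $H_{-\ell}^kf$ (assumed in $L^2$) with $h_{\alpha+ke_\ell}$ and use that $H_\ell$ and $H_{-\ell}$ are formal adjoints, so $\langle H_{-\ell}^kf,h_{\alpha+ke_\ell}\rangle=\langle f,H_\ell^kh_{\alpha+ke_\ell}\rangle$ and Bessel's inequality yields the coefficient estimate; for the upper bound, prove the inequality first for finite Hermite sums $f_N$ and pass to the limit using that $H_{j_1}\cdots H_{j_m}f_N$ is Cauchy in $L^2$ and that these operators are continuous on distributions, so the limit is the distributional derivative of $f$. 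With these routine additions your proof is complete and gives both the set equality and the norm equivalence.
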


For general integer $s\geq 1$, we can also characterize $W_H^{s,2}(\RR)$ with the help of operators $H_j$.
\begin{lem}\label{lem qq}
	For $s\geq 1$, there holds
\begin{equation}\label{c3}
	\|f\|_{W_H^{s,2}(\RR)}\approx \sum_{1\leq |j|\leq n} \|H_jf\|_{W_H^{s-1,2}(\RR)} + \|f\|_{L^2(\RR)}.
\end{equation}
\end{lem}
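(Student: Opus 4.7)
The plan is to prove \eqref{c3} by a direct spectral computation in the Hermite basis $\{h_\alpha\}_{\alpha\in\mathbb{N}_0^n}$, which simultaneously diagonalizes $H$ and the norm of $W_H^{s,2}(\RR)$. The key ingredient is the standard one-variable ladder identity $\bigl(\frac{d}{dx}+x\bigr)h_k=\sqrt{2k}\,h_{k-1}$, $\bigl(-\frac{d}{dx}+x\bigr)h_k=\sqrt{2(k+1)}\,h_{k+1}$, which applied coordinate-wise gives
$$
H_j h_\alpha=\sqrt{2\alpha_j}\,h_{\alpha-e_j},\qquad H_{-j}h_\alpha=\sqrt{2(\alpha_j+1)}\,h_{\alpha+e_j},
$$
for $1\leq j\leq n$, with the convention $h_{\alpha-e_j}=0$ when $\alpha_j=0$.

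Writing $f=\sum_\alpha c_\alpha h_\alpha$ and computing the Hermite coefficients of $H_j f$ and $H_{-j}f$ via these identities, a shift of index yields
\begin{align*}
\sum_{j=1}^n\|H_j f\|_{W_H^{s-1,2}(\RR)}^2 &=\sum_{\alpha\in\mathbb{N}_0^n} (2|\alpha|-2+n)^{s-1}\,(2|\alpha|)\,|c_\alpha|^2,\\
\sum_{j=1}^n\|H_{-j}f\|_{W_H^{s-1,2}(\RR)}^2 &=\sum_{\alpha\in\mathbb{N}_0^n} (2|\alpha|+2+n)^{s-1}\,(2|\alpha|+2n)\,|c_\alpha|^2,
\end{align*}
where the first identity receives no contribution from $|\alpha|=0$. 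Denote by $\mu_\alpha$ the sum of the two weights on $|c_\alpha|^2$.

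Since $s\geq 1$, the three quantities $2|\alpha|-2+n,\,2|\alpha|+n,\,2|\alpha|+2+n$ are mutually comparable uniformly in $\alpha$ for $|\alpha|\geq 1$ (because all lie in an interval $[c(2|\alpha|+n),\,C(2|\alpha|+n)]$ with constants depending only on $n$), and for $|\alpha|=0$ the second line alone contributes $2n(n+2)^{s-1}\approx n^s$. Consequently $\mu_\alpha\approx (2|\alpha|+n)^s$ for every $\alpha$, which gives
$$
\sum_{1\leq|j|\leq n}\|H_j f\|_{W_H^{s-1,2}(\RR)}^2\approx \sum_{\alpha\in\mathbb{N}_0^n}(2|\alpha|+n)^s|c_\alpha|^2=\|f\|_{W_H^{s,2}(\RR)}^2,
$$
and the $\|f\|_{L^2(\RR)}$ term on the right of \eqref{c3} is automatically absorbed since $(2|\alpha|+n)^s\geq n^s\geq 1$. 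Passing from sums of squared norms to the sum of norms appearing in \eqref{c3} costs only a constant factor depending on $n$ by Cauchy--Schwarz.

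The main bookkeeping hurdle is tracking the index shifts across the boundary $\alpha_j=0$, which is handled by the convention above; apart from that, the argument reduces to the elementary two-sided comparison of $\mu_\alpha$ with $(2|\alpha|+n)^s$. Strictly speaking the ladder computation is done first for finite linear combinations of Hermite functions, and then extended to all of $W_H^{s,2}(\RR)$ by the density of such combinations, which follows from the definition of $W_H^{s,2}(\RR)$ via its Hermite expansion.
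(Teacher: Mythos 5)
Your proof is correct, and it takes a genuinely different route from the paper. The paper first settles the integer case $s\in\mathbb{N}$ by quoting the Bongioanni--Torrea equivalence $W_H^{k,2}(\RR)=\widetilde{W}_H^{k,2}(\RR)$ (Lemma \ref{lem1}), and then, for $k<s<k+1$, writes $\|f\|_{W_H^{s,2}}=\|H^{(s-k)/2}f\|_{W_H^{k,2}}$ and invokes the commutation identities $H_jH^{(s-k)/2}f=(H\pm2)^{(s-k)/2}H_jf$ from \cite[Lemma 4]{BT}, finishing with the spectral comparison $(2|\alpha|+n)^{k-1}(2|\alpha|+n\pm2)^{s-k}\approx(2|\alpha|+n)^{s-1}$. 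You instead diagonalize everything at once: the ladder identities $H_jh_\alpha=\sqrt{2\alpha_j}\,h_{\alpha-e_j}$, $H_{-j}h_\alpha=\sqrt{2(\alpha_j+1)}\,h_{\alpha+e_j}$ give the exact weights $(2|\alpha|-2+n)^{s-1}(2|\alpha|)$ and $(2|\alpha|+2+n)^{s-1}(2|\alpha|+2n)$ on $|c_\alpha|^2$, whose sum is comparable to $(2|\alpha|+n)^{s}$ uniformly (I checked the boundary cases: the $|\alpha|=0$ term of the first sum vanishes, which also sidesteps the formally undefined base $n-2$ when $n\le 2$, and the $|\alpha|=0$ contribution $2n(2+n)^{s-1}\approx n^{s}$ of the second sum makes the $\|f\|_{L^2}$ term redundant). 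Your argument treats all $s\geq 1$ uniformly, requires no induction and no appeal to \cite{BT}, so it is more elementary and self-contained; the paper's proof is shorter modulo the cited results and fits its surrounding framework, since Lemma \ref{lem1} is used elsewhere anyway. The only point to state carefully is the extension from finite Hermite sums to general $f$: either your density remark, or the one-line observation that $\langle H_jf,h_\beta\rangle=\langle f,H_{-j}h_\beta\rangle$ holds distributionally for any $f\in L^2(\RR)$, so the shifted-coefficient formulas and hence the weighted-sum identities are exact for all $f$ under consideration.
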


\begin{proof}
	It is obvious that (\ref{c3}) holds for $s\in \mathbb{N}$ by Lemma~{\ref{lem1}}. Assume $k<s<k+1$, where $k\geq 1$ is an integer. 
	Applying (\ref{c3}) to the norm $\|\cdot\|_{W_H^{k,2}(\RR)}$ indicates
$$
	\|f\|_{W_H^{s,2}(\RR)}= \|H^{s-k\over 2}f \|_{W_H^{k,2}(\RR)}\approx \sum_j \|H_jH^{s-k\over 2}f\|_{W_H^{k-1,2}(\RR)} + \|f\|_{L^2(\RR)}.
$$ 

It follows from Lemma 4 in \cite{BT} that
\begin{equation}\label{c1}
	H_jH^{s-k\over 2}f= (H+2)^{s-k\over 2}H_jf,\quad 1\leq j\leq n,
\end{equation}
and 
\begin{equation}\label{c2}
	H_jH^{s-k\over 2}f= (H-2)^{s-k\over 2}H_jf,\quad -n\leq j\leq -1.
\end{equation}
By (\ref{c1}), (\ref{c2}) and the definition of the ${W_H^{k,2}(\RR)}$-norm, we have
\begin{align*}
	\|H_jH^{s-k\over 2}f\|_{W_H^{k-1,2}(\RR)}^2
&= \|(H\pm2)^{s-k\over 2}H_jf\|_{W_H^{k-1,2}(\RR)}^2  \\
&=  \|H^{k-1\over 2}(H\pm2)^{s-k\over 2}H_jf\|_{L^2(\RR)}^2  \\
&= \sum_\alpha (2|\alpha|+n)^{k-1}(2|\alpha|+n\pm 2)^{s-k} |\langle H_jf,h_\alpha \rangle|^2  \\
&\approx \sum_\alpha (2|\alpha|+n)^{s-1}|\langle H_jf,h_\alpha \rangle|^2  \\
&= \|H_jf\|_{W_H^{s-1,2}}^2(\RR).
\end{align*}
Thus, $$\|f\|_{W_H^{s,2}(\RR)}\approx \sum_j \|H_jf\|_{W_H^{s-1,2}(\RR)}+ \|f\|_{L^2(\RR)}.$$
The proof of Lemma \ref{lem qq} is complete.
\end{proof}

For any $a=(a_1, a_2, \cdots, a_n)\in\R^n$ we use $\tau_a$ to denote the operator of translation by $a$, namely,
$\tau_af(x)=f(x-a)$.

\begin{lem}
Let $m\in \mathbb{N}$, $a\in \mathbb{R}$, and $f\in L^{2}(\RR)$. Then
$$
H_{j_{1}}\cdots H_{j_{m}}\tau_{a}f(x)=\sum_{l\leq m}p_{l}(a)\tau_{a}H_{j'_{1}}\cdots H_{j'_{l}}f(x),
$$
where
$$
1\leq |j_{1}|,\cdots ,|j_{m}|\leq n,1\leq |j'_{1}|,\cdots ,|j'_{l}|\leq n,
$$
and $p_{l}(\cdot)$ is a polynomial of order $l, \ 1\leq l\leq m.$
\label{lem2}
\end{lem}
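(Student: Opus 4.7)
The plan is to reduce the identity to the elementary first-order commutation relation
$$H_{j}\, \tau_a = \tau_a\bigl(H_j + a_{|j|}\bigr), \qquad 1 \leq |j| \leq n,$$
and then iterate it $m$ times. For the base commutation I would use $H_j = \partial_{x_j} + x_j$ (and $H_{-j} = -\partial_{x_j} + x_j$) together with the identity $x_j\, f(x-a) = (x_j - a_j)f(x-a) + a_j f(x-a)$. Since $\partial_{x_j}$ commutes with $\tau_a$, this gives $H_j \tau_a f(x) = \tau_a H_j f(x) + a_j \tau_a f(x)$, and an identical argument handles $H_{-j}$, because the multiplication-by-$x_j$ piece is common to both operators and the sign on $\partial_{x_j}$ is irrelevant to its commutation with $\tau_a$.

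Next, iterating the base relation $m$ times yields
$$H_{j_1}\cdots H_{j_m}\, \tau_a f = \tau_a \prod_{k=1}^{m} \bigl(H_{j_k} + a_{|j_k|}\bigr) f,$$
where the product is taken in its original left-to-right order. I would then expand this product by distributing over all subsets $S \subseteq \{1,\ldots,m\}$: from the $k$-th factor one picks $H_{j_k}$ when $k \notin S$ and the scalar $a_{|j_k|}$ when $k \in S$. Grouping the resulting $2^m$ terms by $l := m - |S|$ (the number of surviving operators) produces a finite sum of terms of the form
$$\Bigl(\prod_{k \in S} a_{|j_k|}\Bigr)\, \tau_a H_{j'_1}\cdots H_{j'_l} f,$$
where $(j'_1, \ldots, j'_l)$ denotes the subsequence of $(j_1,\ldots,j_m)$ obtained by dropping the indices in $S$. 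Each coefficient is a monomial in the coordinates of $a$ of degree $m - l$; after collecting equal operator tails one obtains the asserted expansion $\sum_{l} p_l(a)\, \tau_a H_{j'_1}\cdots H_{j'_l} f$, with $p_l$ a polynomial in $a$.

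I do not anticipate any real obstacle: the lemma is a purely algebraic bookkeeping identity. The only point requiring attention is that the non-commuting operators $H_{j_k}$ must retain their original ordering in every surviving term, but this is automatic once the expansion is viewed as simply deleting scalar factors from a fixed ordered product. Equivalently, the same identity can be established by a short induction on $m$ using only the base commutation relation above, with no explicit combinatorial expansion needed.
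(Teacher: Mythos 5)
Your proposal is correct and follows essentially the same route as the paper: both rest on the single commutation relation $H_j\tau_a=\tau_a\bigl(H_j+a_{|j|}\bigr)$, proved by splitting $x_{|j|}f(x-a)=(x_{|j|}-a_{|j|})f(x-a)+a_{|j|}f(x-a)$, and then iterate (the paper simply invokes induction on $m$, while you additionally write out the expansion of $\tau_a\prod_k(H_{j_k}+a_{|j_k|})$ over subsets, which is just a more explicit form of the same induction). The only cosmetic remark is that your bookkeeping correctly shows the coefficient of an $l$-fold operator product has degree $m-l$ in $a$, which in particular gives the polynomial-in-$a$ bound the lemma is after.
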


\begin{proof}
For $1\leq |j|\leq n$,
\begin{eqnarray*}
H_{j}\tau_{a}f(x)&=&\pm \frac{\partial}{\partial x_{|j|}}f(x-a)+x_{|j|}f(x-a)\\
&=&\pm\frac{\partial}{\partial (x-a)_{|j|}}f(x-a)\\
&&\qquad+(x_{|j|}-a_{|j|})f(x-a)+a_{|j|}f(x-a)\\
&=&H_{j}f(x-a)+a_{|j|}f(x-a)\\
&=&\tau_{a}[H_{j}+a_{|j|}]f(x).
\end{eqnarray*}
By mathematical induction, we obtain the desired result.
\end{proof}

The  Bargmann transform is the classical tool in mathematics analysis and mathematical physics
(see \cite{B1, B2, F, G, Z2}  and references therein).
   Consider $f\in L^2(\RR)$, and define
\begin{align}\label{e2.1}
 \mathcal{B} f(z)&:= \left({2\over \pi}\right)^{n\over 4}\int_{\RR} f(x) e^{2x\cdot z-x^2- {z^2\over 2}} dx\nonumber \\
 &= \left({2\over \pi}\right)^{n\over 4} e^{ {z^2\over 2}}  \int_{\RR} f(x) e^{-(x-z)^2}dx, \ \ \ \ z\in \CC.
\end{align}
   For $s\geq 0$
it is clear that $W_{H}^{s,2}(\RR)\subset L^{2}(\RR)$, so $\mathcal{B}f$ is well-defined on $W_{H}^{s,2}(\RR)$. Also, it
is well known that, for
$$
f=\sum_{\alpha  \in \mathbb{N}_{0}^{n}}c_\alpha h_{\alpha },
$$
we have
$$
\mathcal{B}f=\sum_{\alpha\in\mathbb{N}_{0}^{n}}c_\alpha
\mathcal{B}h_{\alpha }=\sum_{\alpha\in\mathbb{N}_{0}^{n}}c_\alpha e_{\alpha }.
$$
Consequently, the Bargmann transform is a unitary operator from $L^2(\R^n)$ to $F^2(\CC)$, and it is also
a unitary operator from the fractional Hermite-Sobolev space $W_H^{s,2}(\RR)$ to the fractional Fock-Sobolev
space $F^{s,2}(\CC)$.

There is an equivalent definition for the Fock-Sobolev spaces, that is, so called  the weighted Fock spaces. Given a real
number $s$ we define $F_{s}^{2}(\CC)$ as the space of entire functions $f$ on $\cn$ with
$$
\|f\|_{F_{s}^{2}(\CC)}^{2}=\omega_{n,s}\int_{\mathbb{C}^{n}}|(1+|z|)^{2s}|f(z)|^{2}e^{-|z|^{2}}\,dz
<\infty,
$$
where $\omega_{n,s}$ is a normalizing constant such that the constant function 1 has norm 1.
It follows from Lemma~\ref{lem3} below that the fractional Fock-Sobolev spaces are the same as these
weighted Fock spaces whose definition does not involve derivatives. Sometimes, it is more convenient to
study function theoretic and operator theoretic properties on the weighted Fock spaces instead of the
Fock-Sobolev spaces.

\begin{lem}[\cite{CP} Theorem1.2]
For $s\in \mathbb{R}$ we have $F^{s,2}(\CC)=F_{s}^{2}(\CC)$ with equivalent norms.
\label{lem3}
\end{lem}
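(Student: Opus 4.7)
The key observation driving my plan is that both norms can be diagonalised against the orthonormal basis $\{e_\alpha\}_{\alpha\in\mathbb{N}_0^n}$ of the ambient Fock space, so the equivalence of the global norms reduces to a pointwise asymptotic for $\|e_\alpha\|_{F_s^2(\CC)}$. Since the weight $(1+|z|)^{2s}e^{-|z|^2}$ is radial, the same angular integration that orthogonalises monomials in $F^2(\CC)$ shows that $\{z^\alpha\}$ remain mutually orthogonal in $F_s^2(\CC)$. Consequently, for any $f=\sum_\alpha c_\alpha e_\alpha$,
\begin{equation*}
\|f\|_{F_s^2(\CC)}^{2}=\sum_{\alpha\in\mathbb{N}_0^{n}}|c_\alpha|^{2}\,\|e_\alpha\|_{F_s^2(\CC)}^{2},
\end{equation*}
so the desired equivalence reduces termwise to the pointwise two-sided bound
\begin{equation*}
\|e_\alpha\|_{F_s^2(\CC)}^{2}\approx (2|\alpha|+n)^{s},\qquad \alpha\in\mathbb{N}_0^{n},
\end{equation*}
with constants independent of $\alpha$.

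To prove the pointwise bound I would pass to polar coordinates $z_j=r_j e^{i\theta_j}$, substitute $u_j=r_j^{2}$ to collapse the angular integration, and invoke the classical Dirichlet identity
\begin{equation*}
\int_{[0,\infty)^{n}} u^{\alpha} g(u_{1}+\cdots+u_{n})\,du=\frac{\alpha!}{(|\alpha|+n-1)!}\int_{0}^{\infty} t^{|\alpha|+n-1}g(t)\,dt,
\end{equation*}
which reduces the calculation of $\|z^\alpha\|_{F_s^2(\CC)}^{2}$ to a one-dimensional integral. Dividing by $\alpha!$ one obtains
\begin{equation*}
\|e_\alpha\|_{F_s^2(\CC)}^{2}=\frac{\omega_{n,s}\pi^{n}}{(|\alpha|+n-1)!}\int_{0}^{\infty}t^{|\alpha|+n-1}(1+\sqrt{t})^{2s}e^{-t}\,dt,
\end{equation*}
so with $N:=|\alpha|+n-1$ the whole question boils down to the single asymptotic
\begin{equation*}
\int_{0}^{\infty} t^{N}(1+\sqrt{t})^{2s}e^{-t}\,dt\;\approx\;N!\,(1+\sqrt{N})^{2s}\;\approx\;N!\,(2|\alpha|+n)^{s}.
\end{equation*}

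I expect the main obstacle to be establishing this last estimate uniformly in $N$ (and in $s\ge 0$). The Gamma density $t^{N}e^{-t}/N!$ concentrates around $t=N$ on a Gaussian scale of width $\sqrt{N}$, and on the comparison interval $[N/2,2N]$ the polynomial factor $(1+\sqrt{t})^{2s}$ is pinched between constant multiples of $N^{s}$, giving the lower bound by restriction and the upper bound after absorbing the polynomial $(1+\sqrt{t})^{2s}$ into the exponentially decaying tails outside that interval; a direct check handles the finitely many small values of $|\alpha|$ for which the Stirling-type estimate degrades. Once this Laplace-type asymptotic is in hand, the constants $\omega_{n,s}$ and $\pi^{n}$ are absorbed into the equivalence constants, the monomial estimates sum back up via the orthogonality already noted, and the identification $F^{s,2}(\CC)=F_s^{2}(\CC)$ with equivalent norms follows.
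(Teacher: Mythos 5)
The paper does not prove this lemma at all: it is imported verbatim as Theorem 1.2 of Cho--Park \cite{CP}, so there is no internal argument to compare yours against. Your proposal is a correct, essentially self-contained proof, and its structure is the natural one: the weight $(1+|z|)^{2s}e^{-|z|^2}$ is invariant under the torus action $(z_1,\dots,z_n)\mapsto(e^{i\theta_1}z_1,\dots,e^{i\theta_n}z_n)$, so the monomials stay orthogonal in $F_s^2(\CC)$; the polar/Dirichlet reduction and the resulting formula for $\|e_\alpha\|_{F_s^2(\CC)}^2$ are correct (and consistent with the normalization, since for $s=0$ it returns $1$); and the Laplace-type estimate $\int_0^\infty t^N(1+\sqrt t)^{2s}e^{-t}\,dt\approx N!\,(1+\sqrt N)^{2s}$, with the bulk interval $[N/2,2N]$ and Chernoff-type exponentially small Gamma tails, does yield $\|e_\alpha\|_{F_s^2(\CC)}^2\approx(2|\alpha|+n)^s$. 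Two small points deserve attention. First, the lemma is stated for all $s\in\mathbb{R}$, so drop the parenthetical restriction to $s\ge 0$: your own tail argument already covers $s<0$, because on $[N/2,2N]$ the factor $(1+\sqrt t)^{2s}$ is still comparable to $N^s$, while outside it one bounds $(1+\sqrt t)^{2s}\le 1$ and uses that the tails are $O(e^{-cN}N!)$, hence dominated by $C_sN^sN!$ for any fixed $s$. Second, the Parseval identity $\|f\|_{F_s^2(\CC)}^2=\sum_\alpha|c_\alpha|^2\|e_\alpha\|_{F_s^2(\CC)}^2$ for an arbitrary $f$ should be justified by integrating over the torus-invariant balls $\{|z|<R\}$ and letting $R\to\infty$ by monotone convergence, and for $s<0$ one should also note that any coefficient sequence with $\sum(2|\alpha|+n)^s|c_\alpha|^2<\infty$ does define an entire function (the $\sqrt{\alpha!}$ in the denominator forces local uniform convergence), so that the two spaces coincide as sets and not merely on a dense subspace. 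With these routine additions your argument is complete, and it has the merit of being more elementary and explicit than an appeal to \cite{CP}.
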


Recall that the Weyl operators $W_a$, $a\in\cn$, are defined by
$$W_af(z):=f(z-a)e^{-\frac{|a|^2}2+z\cdot\bar a}.$$

\begin{lem}\label{lem4}

Suppose $s\in\R$ and $a\in\cn$. Then $W_{a}$ is bounded on $F^{s,2}(\CC)$. Moreover, for all $f\in F^{s,2}(\CC)$
$$
	\|W_a f\|_{F^{s,2}(\CC)} \leq  C (1+|a|^{|s|})\,\|f\|_{F^{s,2}(\CC)}.
$$
\end{lem}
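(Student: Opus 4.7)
The plan is to reduce the claim to the weighted-Fock description provided by Lemma~\ref{lem3}, which removes derivatives from the norm and leaves only a weighted $L^{2}$ expression. Since $F^{s,2}(\CC)=F_{s}^{2}(\CC)$ with equivalent norms, it suffices to prove
\[
\|W_{a}f\|_{F_{s}^{2}(\CC)}\le C\,(1+|a|)^{|s|}\,\|f\|_{F_{s}^{2}(\CC)},
\]
which after absorbing a constant into $C$ gives the desired estimate with $(1+|a|^{|s|})$.

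First I would compute the Gaussian weight against $|W_{a}f|^{2}$ explicitly. By the definition of $W_{a}$,
\[
|W_{a}f(z)|^{2}e^{-|z|^{2}}=|f(z-a)|^{2}e^{-|a|^{2}+2\re(z\cdot\bar a)-|z|^{2}}=|f(z-a)|^{2}e^{-|z-a|^{2}},
\]
which is the standard translation-covariance of the Gaussian measure under Weyl shifts. Inserting this into the weighted-Fock norm and changing variables $w=z-a$ gives
\[
\|W_{a}f\|_{F_{s}^{2}(\CC)}^{2}=\omega_{n,s}\int_{\cn}(1+|w+a|)^{2s}|f(w)|^{2}e^{-|w|^{2}}\,dw.
\]

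The remaining step is to compare $(1+|w+a|)^{2s}$ with $(1+|w|)^{2s}$. For $s\ge 0$ I would use the elementary inequality
\[
1+|w+a|\le 1+|w|+|a|\le (1+|w|)(1+|a|),
\]
which gives $(1+|w+a|)^{2s}\le (1+|a|)^{2s}(1+|w|)^{2s}$. For $s<0$ I would apply the same inequality in the other direction, namely $1+|w|\le(1+|w+a|)(1+|a|)$, which yields $(1+|w+a|)^{2s}\le (1+|a|)^{-2s}(1+|w|)^{2s}$. In either case,
\[
\|W_{a}f\|_{F_{s}^{2}(\CC)}^{2}\le (1+|a|)^{2|s|}\,\|f\|_{F_{s}^{2}(\CC)}^{2},
\]
and a trivial splitting $|a|\le 1$ vs.\ $|a|>1$ converts $(1+|a|)^{|s|}$ into $C(1+|a|^{|s|})$ with $C$ depending only on $s$, finishing the proof.

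There is no real obstacle here; the estimate is essentially a one-line consequence of the translation identity for the Gaussian weight plus the elementary majorization of $(1+|w+a|)$. The only point worth being careful about is handling $s<0$ (which is not needed in the stated range $s\ge 0$ of the main theorem but is covered by the lemma as written), where the direction of the inequality between $1+|w|$ and $1+|w+a|$ must be reversed. Since Lemma~\ref{lem3} reduces everything to a weight that behaves nicely under translation, no delicate estimate involving Hermite expansions or operators $H_{j}$ is needed.
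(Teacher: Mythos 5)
Your proposal is correct and follows essentially the same route as the paper: reduce to the weighted Fock norm via Lemma~\ref{lem3}, use the translation identity $|W_{a}f(z)|^{2}e^{-|z|^{2}}=|f(z-a)|^{2}e^{-|z-a|^{2}}$ and a change of variables, then compare $(1+|z+a|)^{2s}$ with $(1+|z|)^{2s}$ by the elementary inequalities valid for both signs of $s$. The only cosmetic difference is your explicit remark converting $(1+|a|)^{|s|}$ into $C(1+|a|^{|s|})$, which the paper leaves implicit.
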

\begin{proof}
By Lemma \ref{lem3}, we have that for any $f\in F^{s,2}(\CC)$, 
\begin{align*}
\|W_{a}f\|_{F^{s,2}(\CC)}^{2}&\leq C\|W_{a}f\|_{F^2_s(\CC)}^{2}\\
&=C\int_{\mathbb{C}^{n}}(1+|z|)^{2s}|f(z-a)|^{2}e^{-|z-a|^{2}}\,dz\\
&=C\int_{\mathbb{C}^{n}}(1+|z+a|)^{2s}|f(z)|^{2}e^{-|z|^{2}}\,dz.
\end{align*}
Note that
 $
	1+|z+a|\leq 1+|z|+|a|\leq (1+|z|)(1+|a|). 
 $
Also 
 $
	1+|z|= 1+|(z+a)-a| \leq (1+|z+a|)(1+|a|) 
 $
and so 
 $
	(1+|z+a|)^{-1}\leq (1+a)(1+|z|)^{-1}.
 $
Thus for $s\in \R$, 
\begin{align*}
\|W_{a}f\|_{F^{s,2}(\CC)}^{2}
&\leq C(1+|a|)^{2|s|} \int_{\cn}(1+|z|)^{2s}|f(z)|^2e^{-|z|^2}\,dz\\
&\leq C(1+|a|)^{2|s|} \|f\|^2_{F^{s,2}(\CC)}.
\end{align*}
The proof of Lemma \ref{lem4} is complete.
\end{proof}

Since $\tau_{a}=\mathcal{B}^{-1}W_{a}\mathcal{B}$ and the Bargmann transform is a unitary
operator from $W_H^{s,2}(\RR)$ to $F^{s,2}(\CC)$, we see that $\tau_{a}$ is bounded on $W_{H}^{s,2}(\RR)$ 
for each $a\in \mathbb{R}^{n}$ and 
\begin{eqnarray}\label{ppp}
\|\tau_{a}f\|_{W_{H}^{s,2}(\RR)}\leq C (1+|a|^{|s|})\,\|f\|_{W_{H}^{s,2}(\RR)}
\end{eqnarray}
for all $f\in W_{H}^{s,2}(\RR)$.
A direct computation shows that $S_{\varphi }$ commutes with $W_{a}$ on $F^{s,2}(\CC)$, that is,
$S_{\varphi }W_{a}=W_{a}S_{\varphi }$; see \cite{CJSWY}. Since $\mathcal{B}W_{H}^{s,2}(\RR)=F^{s,2}(\CC)$
and $\mathcal{B}h_{\alpha }=e_{\alpha }$, we see that $T=\mathcal{B}^{-1}S_{\varphi }\mathcal{B}$ 
commutes with $\tau_{a}$ for $a\in \mathbb{R}^{n}$.  In fact, for any $f\in W_{H}^{s,2}(\RR)$,
\begin{eqnarray*}
\tau_{a}Tf(z)&=&\tau_{a}\mathcal{B}^{-1}S_{\varphi }\mathcal{B}f\\
&=&(\mathcal{B}^{-1}W_{a}\mathcal{B})(\mathcal{B}^{-1}S_{\varphi }\mathcal{B})f\\
&=&(\mathcal{B}^{-1}W_{a}S_{\varphi }\mathcal{B})f\\
&=&(\mathcal{B}^{-1}S_{\varphi }W_{a}\mathcal{B})f\\
&=&(\mathcal{B}^{-1}S_{\varphi }\mathcal{B})(\mathcal{B}^{-1}W_{a}\mathcal{B})f\\
&=&T\tau_{a}f(z).
\end{eqnarray*}

In the following, the Fourier transform of a function $f$ is given by 
$$
{\mathcal F} f(x) :=\pi^{-{n\over2}} \int_{\RR} e^{-2ix\cdot y}f(y) dy, \ \ \ \ x\in\RR,
$$
The inverse of the Fourier  transform ${\mathcal F}$ will be denoted by ${\mathcal F}^{-1}$, i.e,
 ${\mathcal F}{\mathcal F}^{-1}={\mathcal F}^{-1}{\mathcal F}=Id$, the identity operator on $L^2(\RR)$.
 Then we have

\begin{lem}
For any $s\in \mathbb{R}$, the Fourier transformation $\mathcal{F}$ is a unitary operator on $W_{H}^{s,2}(\RR)$.
\label{lem5}
\end{lem}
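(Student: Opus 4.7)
The natural approach is to exploit the classical fact that the Hermite functions simultaneously diagonalize both $H$ and $\mathcal{F}$, so that $\mathcal{F}$ acts as a diagonal unitary in the very basis used to define the $W_H^{s,2}$-norm. The plan is then to read off the isometry from the coefficient description of the norm.

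The first, and essentially only substantive, step is to verify that each Hermite function is an eigenfunction of $\mathcal{F}$ with unimodular eigenvalue:
\begin{equation*}
\mathcal{F}h_{\alpha}=\mu_{\alpha}\,h_{\alpha},\qquad |\mu_{\alpha}|=1,\quad \alpha\in\mathbb{N}_{0}^{n}.
\end{equation*}
In one variable this follows from two standard observations: the Gaussian $h_{0}$ is sent by $\mathcal{F}$ to a scalar multiple of itself (a Gaussian integral), and $\mathcal{F}$ intertwines the creation operator $A^{\ast}=(x-d/dx)/\sqrt{2}$ with $\mp i A^{\ast}$ (integration by parts against the Fourier kernel). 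Induction via $A^{\ast}h_{k}=\sqrt{k+1}\,h_{k+1}$ yields eigenvalues of modulus one, and the tensor product structure $h_{\alpha}=\prod_{j}h_{\alpha_{j}}$ extends the identity to $\RR$.

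Given this, for any $f=\sum_{\alpha}c_{\alpha}h_{\alpha}\in W_{H}^{s,2}(\RR)$, Plancherel's theorem on $L^{2}(\RR)$ justifies the term-by-term Fourier transform
\begin{equation*}
\mathcal{F}f=\sum_{\alpha\in\mathbb{N}_{0}^{n}}\mu_{\alpha}c_{\alpha}\,h_{\alpha},
\end{equation*}
so the Hermite coefficients of $\mathcal{F}f$ have the same moduli as those of $f$. Substituting into the spectral expression of the norm,
\begin{equation*}
\|\mathcal{F}f\|_{W_{H}^{s,2}(\RR)}^{2}
=\sum_{\alpha}(2|\alpha|+n)^{s}|\mu_{\alpha}c_{\alpha}|^{2}
=\sum_{\alpha}(2|\alpha|+n)^{s}|c_{\alpha}|^{2}
=\|f\|_{W_{H}^{s,2}(\RR)}^{2},
\end{equation*}
shows that $\mathcal{F}$ is an isometric embedding of $W_{H}^{s,2}(\RR)$ into itself. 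The same argument applied to $\mathcal{F}^{-1}$, whose Hermite eigenvalues $\overline{\mu_{\alpha}}$ are again unimodular, delivers the reverse inclusion, and hence $\mathcal{F}$ is unitary on $W_{H}^{s,2}(\RR)$.

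The main obstacle, such as it is, is purely bookkeeping: checking the eigenfunction identity $\mathcal{F}h_{\alpha}=\mu_{\alpha}h_{\alpha}$ under the specific normalization of $\mathcal{F}$ adopted here; this reduces to a classical one-dimensional calculation and an application of Fubini. After that, the conclusion is immediate from the definition of the $W_{H}^{s,2}$-norm in terms of the Hermite coefficients, and no further analytic estimates beyond Plancherel are required.
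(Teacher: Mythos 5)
Your argument is correct in substance, but it takes a different route from the paper: the paper never touches the Hermite eigenfunction expansion of $\mathcal{F}$ directly. Instead it conjugates by the Bargmann transform, quoting from \cite{CJSWY} the identities $\mathcal{B}\mathcal{F}\mathcal{B}^{-1}f(z)=f(-iz)$ and $\mathcal{B}\mathcal{F}^{-1}\mathcal{B}^{-1}f(z)=f(iz)$, observing that the rotations $f(z)\mapsto f(\pm iz)$ multiply the coefficients in the expansion along $\{e_\alpha\}$ by unimodular factors and hence are unitary on $F^{s,2}(\CC)$, and then transporting this back through the unitarity of $\mathcal{B}\colon W_H^{s,2}(\RR)\to F^{s,2}(\CC)$. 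Your proof is the real-variable shadow of the same fact: since $\mathcal{B}h_\alpha=e_\alpha$ and $e_\alpha(-iz)=(-i)^{|\alpha|}e_\alpha(z)$, the Bargmann identity is exactly the statement $\mathcal{F}h_\alpha=\mu_\alpha h_\alpha$ with $|\mu_\alpha|=1$ that you prove directly; what your route buys is independence from the Fock-space machinery, at the cost of redoing the classical eigenfunction computation that the paper imports wholesale from \cite{CJSWY}. One caution on the step you dismiss as bookkeeping: with the paper's scaled kernel $\pi^{-n/2}e^{-2ix\cdot y}$, the eigenfunction identity holds not for the Hermite functions normalized with the weight $e^{-|x|^2/2}$ as written in Section 2 (for those, $\mathcal{F}$ dilates the Gaussian: $\mathcal{F}h_0(x)=\sqrt2\,\pi^{-1/4}e^{-2x^2}$ in one variable), but for the correspondingly rescaled Hermite functions carrying the weight $e^{-|x|^2}$, i.e.\ precisely those that the paper's Bargmann transform sends to $e_\alpha$. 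This is a normalization mismatch internal to the paper (its stated $\mathcal{B}$ and $\mathcal{F}$ are adapted to the rescaled basis), so your proof is fine once you verify the eigenvalue relation in the convention consistent with $\mathcal{B}h_\alpha=e_\alpha$, rather than treating it as automatic for the formulas as literally printed.
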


\begin{proof}
Following the proof of Lemma 2.3 in \cite{CJSWY}, we get
$$
\mathcal{B}\mathcal{F}\mathcal{B}^{-1}f(z)=f(-iz), \hskip 5mm\mbox{and}\hskip 5mm  \mathcal{B}\mathcal{F}^{-1}
\mathcal{B}^{-1}f(z)=f(iz).
$$
It is obvious that the operators $f(z)\mapsto f(iz)$ and $f(z)\mapsto f(-iz)$ are unitary on $F^{s,2}(\CC)$.
Since the Bargmann transform is a unitary from $W_H^{s,2}(\RR)$ to $F^{s,2}(\CC)$, we conclude that the
Fourier transform $\mathcal{F}$ is also unitary on $W_H^{s,2}(\RR)$.
\end{proof}

\begin{lem}[\cite{BT} Lemma 3]
Let $p\in [1, \infty )$ and $s>0$. Then the operator
$|x|^{2s}H^{-s}$ is bounded on $L^{p}(\mathbb{R}^{n})$, that is, there is a positive constant $M$ such that
$$
\int_{\mathbb{R}^{n}}||x|^{2s}H^{-s}f(x)|^{p}dx\leq M\int_{\mathbb{R}^{n}}|f(x)|^{p}dx
$$
for all $f\in L^{p}(\mathbb{R}^{n})$.
\label{lem6}
\end{lem}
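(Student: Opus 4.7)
The statement is Lemma~3 of \cite{BT}; the plan is to give an independent argument in two stages. For the $L^{2}$ case, I would start from the quadratic form identity $\langle Hf,f\rangle_{L^{2}}=\|\nabla f\|_{L^{2}}^{2}+\||x|f\|_{L^{2}}^{2}$, which yields the operator inequality $|x|^{2}\le H$ and hence, by spectral calculus, $\||x|H^{-1/2}f\|_{L^{2}}\le\|f\|_{L^{2}}$. For positive integer $s$ I would iterate this, exploiting that the commutators $[x_{j},H]=-2\partial_{j}$ produce only strictly lower-order terms, controllable by the same tools (or equivalently, that each $H_{\pm j}H^{-1/2}$ is $L^{2}$-bounded, by (\ref{eq5})). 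Fractional $s$ between integers would then follow by complex interpolation on the analytic family $|x|^{2z}H^{-z}$, using $L^{2}$-boundedness of the imaginary powers $H^{it}$.

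For general $p\in[1,\infty)$, the plan is to pass to an integral kernel representation. The subordination formula
\begin{equation*}
H^{-s}=\frac{1}{\Gamma(s)}\int_{0}^{\infty}t^{s-1}e^{-tH}\,dt
\end{equation*}
combined with Mehler's explicit kernel
\begin{equation*}
e^{-tH}(x,y)=(2\pi\sinh 2t)^{-n/2}\exp\!\Bigl(-\tfrac{1}{2}\coth(2t)(|x|^{2}+|y|^{2})+\tfrac{x\cdot y}{\sinh 2t}\Bigr)
\end{equation*}
represents $|x|^{2s}H^{-s}$ as an explicit positive integral operator. The key move is to absorb the polynomial factor $|x|^{2s}$ into the Gaussian in $x$ via the elementary bound $r^{2s}e^{-cr^{2}/2}\le C_{s}c^{-s}$ with $c=\coth 2t$, after which the resulting kernel is pointwise dominated by $\int_{0}^{\infty}\omega(t)\,e^{-tH}(x,y)\,dt$ for a suitable time weight $\omega$. $L^{p}$-boundedness would then follow from Minkowski's integral inequality together with the uniform-in-$t$ $L^{p}$-boundedness of $e^{-tH}$ (clear from the explicit kernel for $t\le 1$ and gaining exponential decay for $t\ge 1$ since the spectrum of $H$ lies in $[n,\infty)$).

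The hard part will be the small-$t$ analysis, where the weight $\omega(t)$ is borderline integrable against the on-diagonal Mehler volume $(\sinh 2t)^{-n/2}$, and where the $L^{1}$ endpoint is genuinely delicate because naive Euclidean intuition (multiplication by $|x|^{2s}$ destroys $L^{1}$-integrability of a Riesz potential) does not apply here. I would handle $p=1$ directly by a Schur test: a calculation from Mehler's formula gives $\int_{\mathbb{R}^{n}}e^{-tH}(x,y)\,dy=\cosh(2t)^{-n/2}e^{-\tanh(2t)|x|^{2}/2}$, which integrates in $t$ against $t^{s-1}/\Gamma(s)$ to yield $H^{-s}\mathbf{1}(x)\sim|x|^{-2s}$ for large $|x|$; this decay exactly cancels the prefactor $|x|^{2s}$ in the Schur integral $\int|K(x,y)|\,dy=|x|^{2s}H^{-s}\mathbf{1}(x)$, which is ultimately why the confinement term $|x|^{2}$ in $H$ makes the estimate valid even at $p=1$. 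The dual Schur bound in $x$ is handled identically by symmetry of the kernel.
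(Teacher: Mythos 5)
You should note first that the paper does not prove this statement at all: it is imported verbatim as Lemma 3 of \cite{BT}, so the only meaningful comparison is with that reference, whose argument (like your main route) represents $|x|^{2s}H^{-s}$ through the subordination formula and Mehler's kernel and then runs a Schur-type test. Your central computation is sound: the identity $\int_{\mathbb{R}^n} e^{-tH}(x,y)\,dy=(\cosh 2t)^{-n/2}e^{-\tanh(2t)|x|^2/2}$ is correct, and integrating it against $t^{s-1}/\Gamma(s)$ does give $\sup_x |x|^{2s}H^{-s}\mathbf{1}(x)<\infty$, the confinement term supplying exactly the decay $|x|^{-2s}$ you predict. The preliminary $L^2$ stage (form inequality $|x|^2\le H$, commutators, Stein interpolation) is fine but becomes redundant once the Schur argument is in place.

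Two points need repair. First, the intermediate claim that boundedness ``follows from Minkowski's inequality together with the uniform-in-$t$ $L^p$ boundedness of $e^{-tH}$'' does not close: because of the cross term $e^{x\cdot y/\sinh 2t}$ only the $\tanh$-part of the Gaussian decay in $x$ can be spent on absorbing $|x|^{2s}$, so the absorption costs a factor of order $\tanh(t)^{-s}\sim t^{-s}$ for small $t$ and the resulting time weight is $\omega(t)\sim t^{-1}$ near $0$, which is not integrable; as you half-acknowledge, it is the exact Schur computation, not Minkowski, that must carry the proof. Second, and more seriously, the dual Schur bound is not ``handled identically by symmetry'': the kernel $K_s(x,y)$ of $H^{-s}$ is symmetric, but the kernel $|x|^{2s}K_s(x,y)$ of the operator in question is not. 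For $p=1$ the relevant condition is the column sum $\sup_y\int_{\mathbb{R}^n} |x|^{2s}K_s(x,y)\,dx=\sup_y H^{-s}\bigl(|\cdot|^{2s}\bigr)(y)<\infty$ (your row-sum bound gives the $L^\infty$ endpoint instead), and this requires a genuinely different, though elementary, estimate: a Gaussian moment computation such as $\int_{\mathbb{R}^n} e^{-tH}(x,y)|x|^{2s}\,dx\lesssim(\cosh 2t)^{-n/2}e^{-\tanh(2t)|y|^2/2}\bigl(|y|^{2s}(\cosh 2t)^{-2s}+(\tanh 2t)^{s}\bigr)$, which upon integration against $t^{s-1}$ is uniformly bounded in $y$. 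With that supplied, both Schur conditions hold for the positive kernel and you obtain all $p\in[1,\infty]$; so your plan is correct after these corrections, and then coincides in substance with the proof in \cite{BT}.
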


\medskip

\section{Multipliers on the fractional Hermite-Sobolev  spaces}

In the following we denote $\mathcal{M}(W_{H}^{s,2}(\RR))$
the space of multipliers on $ W_{H}^{s,2}(\RR)$; {\it i.e.} , the set of all functions $g$ such that 
$$
\|g f\|_{ W_{H}^{s,2}(\RR)}\leq M \|f \|_{ W_{H}^{s,2}(\RR)} 
$$
for all $f\in  W_{H}^{s,2}(\RR)$, equipped with the norm defined as the infinium
 of all such $M$ that the above inequality holds. 

Before we discuss the multipliers, we need some characterizations for Hermite-Sobolev functions at first.

 Fix a $K\in{\mathbb N}$, we define
\begin{eqnarray}\label{e1.2}
G_{s,K,H}(f)(x)=\left( \int_0^{\infty} |\big(I -e^{-t^2H}\big)^K f(x)|^2 {dt\over t^{1+2s}} \right)^{1/2}.
\end{eqnarray}
For simplicity, we will often write $G_{s,H}$ in place of $G_{s,1,H}.$
Then we have 
  
\begin{lem}\label{le1.1} If $0< s<2K$, then for  $f\in W^{s, 2}_H(\RN)$
\begin{eqnarray*} 
\|f\|_{W^{s, 2}_H(\RN)} \sim 
\|G_{s,K, H}(f)\|_{L^{  2}(\RN)} 
\end{eqnarray*}
with the implicit equivalent positive constants independent of $f.$
\end{lem}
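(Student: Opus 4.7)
The plan is to prove the equivalence via direct spectral computation, exploiting the fact that the heat semigroup $e^{-t^{2}H}$ diagonalizes in the Hermite basis $\{h_{\alpha}\}_{\alpha\in\mathbb{N}_{0}^{n}}$. Expanding $f=\sum_{\alpha}c_{\alpha}h_{\alpha}$ in $L^{2}(\R^{n})$, we have $e^{-t^{2}H}f=\sum_{\alpha}c_{\alpha}e^{-t^{2}(2|\alpha|+n)}h_{\alpha}$, so
\[
(I-e^{-t^{2}H})^{K}f=\sum_{\alpha\in\mathbb{N}_{0}^{n}}c_{\alpha}\bigl(1-e^{-t^{2}(2|\alpha|+n)}\bigr)^{K}h_{\alpha}.
\]
By orthonormality of $\{h_{\alpha}\}$, Plancherel gives $\|(I-e^{-t^{2}H})^{K}f\|_{L^{2}}^{2}=\sum_{\alpha}|c_{\alpha}|^{2}(1-e^{-t^{2}(2|\alpha|+n)})^{2K}$.

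Next, I would integrate in $t$ against the measure $dt/t^{1+2s}$ and use Fubini (everything is non-negative) to swap the integral and sum:
\[
\|G_{s,K,H}(f)\|_{L^{2}}^{2}=\sum_{\alpha\in\mathbb{N}_{0}^{n}}|c_{\alpha}|^{2}\int_{0}^{\infty}\bigl(1-e^{-t^{2}(2|\alpha|+n)}\bigr)^{2K}\frac{dt}{t^{1+2s}}.
\]
For each fixed $\alpha$, the change of variables $u=t^{2}(2|\alpha|+n)$ yields $dt/t^{1+2s}=\tfrac{1}{2}(2|\alpha|+n)^{s}u^{-1-s}du$, so the inner integral equals
\[
\frac{(2|\alpha|+n)^{s}}{2}\int_{0}^{\infty}(1-e^{-u})^{2K}u^{-1-s}du.
\]
Denoting the universal constant $C_{s,K}:=\tfrac{1}{2}\int_{0}^{\infty}(1-e^{-u})^{2K}u^{-1-s}du$, we conclude
\[
\|G_{s,K,H}(f)\|_{L^{2}}^{2}=C_{s,K}\sum_{\alpha\in\mathbb{N}_{0}^{n}}(2|\alpha|+n)^{s}|c_{\alpha}|^{2}=C_{s,K}\,\|f\|_{W_{H}^{s,2}(\R^{n})}^{2}.
\]

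The only nontrivial point is verifying that the constant $C_{s,K}$ is finite and positive, which is exactly where the hypothesis $0<s<2K$ enters. Near $u=0$, the Taylor expansion $(1-e^{-u})^{2K}\sim u^{2K}$ makes the integrand behave like $u^{2K-1-s}$, integrable precisely when $s<2K$; near $u=\infty$, $(1-e^{-u})^{2K}$ is bounded and $u^{-1-s}$ is integrable precisely when $s>0$. Positivity is immediate since the integrand is positive on $(0,\infty)$. The main obstacle is thus this convergence verification, which pins down the sharp range of $s$; the algebraic manipulations (spectral expansion, Fubini, scaling) are routine once the spectral structure of $H$ is in hand.
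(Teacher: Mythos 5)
Your proof is correct, and it takes a more concrete route than the paper. You diagonalize everything in the Hermite eigenbasis: expanding $f=\sum_\alpha c_\alpha h_\alpha$, applying Parseval for each fixed $t$, swapping sum and $t$-integral by Tonelli, and rescaling $u=t^2(2|\alpha|+n)$, you obtain the exact identity $\|G_{s,K,H}(f)\|_{L^2}^2=C_{s,K}\sum_\alpha(2|\alpha|+n)^s|c_\alpha|^2$ with $C_{s,K}=\tfrac12\int_0^\infty(1-e^{-u})^{2K}u^{-1-s}\,du$, and the hypothesis $0<s<2K$ enters exactly as the finiteness of this constant. The paper instead argues abstractly via the spectral theorem: the upper bound $\|G_{s,K,H}(f)\|_{L^2}\le C\|H^{s/2}f\|_{L^2}$ comes from a quadratic estimate for $\psi(z)=z^{-s}(1-e^{-z^2})^K$, and the reverse bound is proved separately through a Calder\'on-type reproducing formula $f=c\int_0^\infty(t^2H)^{-s}(1-e^{-t^2H})^{2K}f\,\tfrac{dt}{t}$ combined with duality and Cauchy--Schwarz. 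Your argument buys an exact equality (not just equivalence) with an explicit constant and avoids the reproducing-formula/duality step, at the price of relying on the discreteness of the spectrum of $H$ and the explicit eigenbasis; the paper's argument is the one that transfers to general nonnegative self-adjoint operators with possibly continuous spectrum. The only points worth making explicit in your write-up are the (routine) Tonelli justification for exchanging the $x$- and $t$-integrations before applying Parseval, and that the identity holds in the extended sense for all $f\in L^2$, so membership in $W^{s,2}_H$ is equivalent to finiteness of either side.
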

 
 \begin{proof} It suffices to show that for $0< s<2K$,
 $$
 C^{-1}\|H^{s/2}f\|_{L^{  2}(\RN)} \leq \|G_{s,K, H}(f)\|_{L^{  2}(\RN)} \leq C\|H^{s/2}f\|_{L^{  2}(\RN)}.
  $$
  Denote by $\psi(z)=z^{-s}(1-e^{-z^2})^K$.
It follows from  the spectral theory (\cite{Yo}) that for any $f\in L^2(\RN),$
\begin{align}
\|G_{s,K, H}(H^{-s/2}f)\|_{L^{  2}(\RN)} &=
\Big\{\int_0^{\infty}\|\psi(t\sqrt{H})f\|_{L^2(X)}^2{dt\over t}\Big\}^{1/2}\nonumber\\
&=\Big\{\int_0^{\infty}\big\langle\,\overline{ \psi}(t\sqrt{H})\,
\psi(t\sqrt{H})f, f\big\rangle {dt\over t}\Big\}^{1/2}\nonumber\\
&=\Big\{\big\langle \int_0^{\infty}|\psi|^2(t\sqrt{H}) {dt\over t}f,
f\big\rangle\Big\}^{1/2}\nonumber\\
&\leq \kappa \|f\|_{L^2(\R^n)},
\label{e3.133}
\end{align} 
 where
$\kappa:=\big\{\int_0^{\infty}|{\psi}(t)|^2 {dt/t}\big\}^{1/2}$.
This shows that $\|G_{s,K, H}(f)\|_{L^{  2}(\RN)} \leq C\|H^{s/2}f\|_{L^{  2}(\RN)}.$

On the other hand, from  the spectral theory (\cite{Yo}) we see that for any $f\in L^2(\RN),$
$$
f=c\int_{-\infty}^{\infty} (t^2H)^{-s}  (1-e^{-t^2H})^{2K} (f) {dt\over t}
$$
for some constant $c>0$, where the integral converges in $L^2(\RN)$.
 Hence for every $g\in L^2(\RN)$ with $\|g\|_{L^2(\RR)}\leq 1$, we apply \eqref{e3.133} to get 
\begin{align*}
\left|\langle f, g\rangle \right| &= c\left| \int_{-\infty}^{\infty} \langle (t^2H)^{-s}  (1-e^{-t^2H})^{2K} f,  g\rangle    {dt\over t}\right|
 \nonumber\\
&= c\left| \int_{-\infty}^{\infty} \langle (t^2H)^{-s/2}  (1-e^{-t^2H})^{ K} f, 
(t^2H)^{-s/2}  (1-e^{-t^2H})^{ K}g\rangle    {dt\over t}\right|\nonumber\\
&\leq c \|G_{s,K, H}(H^{-s/2}f)\|_{L^{  2}(\RN)} \|G_{s,K, H}(H^{-s/2}g)\|_{L^{  2}(\RN)} 
\nonumber\\
&\leq C \|G_{s,K, H}(H^{-s/2}f)\|_{L^{  2}(\RN)},  
\end{align*}
which shows that $\|H^{s/2}f\|_{L^{  2}(\RN)} \leq C\|G_{s,K, H}(f)\|_{L^{  2}(\RN)}.$
This proves the lemma.
 \end{proof}
 
 In the following, we let $\eta(x)\in C_0^{\infty}(\RN)$ satisfy
 
 \smallskip
 
i) $0\leq \eta(x)\leq 1,$
 
  \smallskip
 
ii) $\eta(x)=1$ on the cube $\{ |x|\leq 1\}$,

 \smallskip
 
iii) $\eta(x)=0$ outside the cube  $\{ |x|\leq 2\}$.

  \smallskip
 
 iv) $\sum\limits_{m\in {\mathbb Z}_n} \eta_m(x)=c_0$ for some constant $c_0$ and all   $x\in \RN$, where  $\eta_m(x)=\eta(x+m)$  and
 ${\mathbb Z}_n$ denotes the lattice points in $\RN$.

\begin{prop}\label{prop2.9}
Let $s\geq 0$.
The function $f$ belongs to $  W^{s, 2}_H(\RN)$
if and only if $f \eta_m\in  W^{s, 2}_H(\RN) $ for every $m\in {\mathbb Z}_n$
and
\begin{eqnarray}\label{e 2.9}
\bigg( \sum_{m\in {\mathbb Z}_n} \| f \eta_m\|_{W^{s, 2}_H(\RN)}^2 \bigg)^{1/2}<\infty,
\end{eqnarray}
in which case this expression is equivalent to $\|f\|_{W^{s, 2}_H(\RN)}.$

Moreover, the necessarity holds for any  function $\eta$ in $C_0^{\infty}(\RN)$.
\end{prop}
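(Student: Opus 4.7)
The plan is to pass to the equivalent square-function norm $\|G_{s,K,H}(f)\|_{L^2(\R^n)}$ given by Lemma~\ref{le1.1}, choosing $K\in\NN$ so that $2K>s$. (The case $s=0$ is trivial since $W_H^{0,2}(\R^n)=L^2(\R^n)$ and the $\eta_m$ have finite overlap, so assume $s>0$.) Both directions rest on the heuristic that $(I-e^{-t^2H})^K$ has a kernel with rapid off-diagonal decay for small $t$ (via the Mehler kernel representation of the Hermite heat semigroup), while for large $t$ the weight $dt/t^{1+2s}$ is integrable near $\infty$.

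\textbf{Sufficiency.} From the pointwise identity $c_0 f=\sum_m f\eta_m$ it suffices, by Lemma~\ref{le1.1}, to show $\|G_{s,K,H}(f)\|_{L^2}^2\lesssim \sum_m \|G_{s,K,H}(f\eta_m)\|_{L^2}^2$. I would split the defining $t$-integral in $G_{s,K,H}$ at $t=1$. On $(0,1]$, the off-diagonal decay of the kernel of $(I-e^{-t^2 H})^K$ ensures that for each fixed $x$ only $O(1)$ many of the terms $(I-e^{-t^2 H})^K(f\eta_m)(x)$ are non-negligible, so pointwise Cauchy--Schwarz converts the resulting $\ell^1$-sum over $m$ into the desired $\ell^2$-sum. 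On $(1,\infty)$, the crude spectral bound $\|(I-e^{-t^2 H})^K g\|_{L^2}\leq 2^K\|g\|_{L^2}$ with $\int_1^\infty dt/t^{1+2s}<\infty$ controls the contribution by $\|f\|_{L^2}^2$, and finite overlap gives $\|f\|_{L^2}^2\lesssim \sum_m \|f\eta_m\|_{L^2}^2$, which is dominated by the right-hand side of \eqref{e 2.9}.

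\textbf{Necessity and main obstacle.} For any $\eta\in C_0^\infty(\R^n)$, I decompose $(I-e^{-t^2 H})^K(f\eta_m) = \eta_m\,(I-e^{-t^2 H})^K f + [(I-e^{-t^2 H})^K,\eta_m]f$. The first term contributes $\eta_m(x)^2\,G_{s,K,H}(f)(x)^2$ pointwise and sums in $m$ with finite overlap to $\|G_{s,K,H}(f)\|_{L^2}^2$. For the commutator, bounding its kernel yields a gain of a factor of $t$ per derivative of $\eta$, so after integrating against $dt/t^{1+2s}$ on $(0,1]$ the result converges in the range $0<s<2K$; on $(1,\infty)$ the crude $L^2$ bound suffices. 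Any residual polynomial weight in $x$ arising from the $x_j$-term in $H_j$ is absorbed into $H^{s/2}$ using Lemma~\ref{lem6}, yielding the required $\ell^2$-summable bound, valid for any $\eta\in C_0^\infty(\R^n)$ since property (iv) is not used here. The main obstacle is making the Mehler kernel off-diagonal decay and the commutator $t$-gain estimates quantitative enough to execute the lattice summation uniformly in $m$; once these kernel estimates are in hand, the rest reduces to routine partition-of-unity bookkeeping.
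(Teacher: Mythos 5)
Your sufficiency sketch is essentially the paper's argument (split the $t$-integral of $G_{s,K,H}$ at $t=1$, use the Gaussian bound \eqref{kkk} and bounded overlap, and invoke Lemma~\ref{le1.1}), and it is fine in outline. The genuine gap is in your necessity step for $s\geq 1$. The claim that ``bounding its kernel yields a gain of a factor of $t$ per derivative of $\eta$'', so that the commutator term converges for all $0<s<2K$, is false. The kernel of $[e^{-t^2H},\eta_m]$ is $K_{e^{-t^2H}}(x,y)\,(\eta_m(y)-\eta_m(x))$, and a pure magnitude bound combined with the mean value theorem gives exactly one power of $t$ and no more: indeed the $L^2\to L^2$ operator norm of this commutator is comparable to $t$ (already for the translation-invariant heat semigroup: test on $f(x)=e^{i\xi\cdot x}g(x)$ with $|\xi|\sim t^{-1}$, where the commutator symbol is of size $t^2|\xi|\sim t$). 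Consequently $\int_0^1\|[(I-e^{-t^2H})^K,\eta_m]f\|_{L^2(\RR)}^2\,dt/t^{1+2s}$ cannot be controlled with only $\|f\|_{L^2(\RR)}$ on the right once $s\geq 1$; extra powers of $t$ require cancellation (moment estimates for the Mehler kernel), and any such expansion necessarily places derivatives of $f$, i.e.\ lower-order $W^{s-1,2}_H$-norms, on the right-hand side. Your appeal to Lemma~\ref{lem6} to absorb polynomial weights does not address this, since the obstruction is already present in the weight-free, translation-invariant model.

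This is exactly why the paper runs the square-function/commutator argument only for $0<s<1$ (where $\int_0^1 t^2\,dt/t^{1+2s}<\infty$) and then handles $s\geq 1$, for both directions, by induction on unit ranges of $s$: it uses Lemma~\ref{lem qq}, $\|f\|_{W_H^{s,2}(\RR)}\approx\sum_j\|H_jf\|_{W_H^{s-1,2}(\RR)}+\|f\|_{L^2(\RR)}$, together with the product identity $H_j(f\eta_m)=(H_jf)\eta_m\pm f\,\partial_j\eta_m$, which is the mechanism that transfers one derivative at a time onto $f$ in a lower-order norm. Your proposal lacks this (or an equivalent higher-order commutator expansion with genuine moment cancellation), so as written it proves the proposition only for $0\leq s<1$, and the case $s\geq 1$ remains open in your argument.
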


\begin{proof} 
The case $s=0$ is trivial. Let us consider the case $0<s<1$. For $f\in L^2(\RN)$, define
\begin{align*} 
G^{(1)}_{s,  H} f(x)&:=\left( \int_0^{1} |(I-e^{-t^2H}) f(x)|^2 {dt\over t^{1+2s}} \right)^{1/2}\\ 
G^{(2)}_{s,  H} f(x)&:=\left( \int_1^{\infty} |(I-e^{-t^2H}) f(x)|^2 {dt\over t^{1+2s}} \right)^{1/2}.
\end{align*}
and so $G_{s,  H} f(x)\leq G^{(1)}_{s,  H} f(x) + G^{(2)}_{s,  H} f(x).$
Since the kernel $ K_{e^{-t^2H}}(x, y)$ of $e^{-t^2H}$  satisfies 
\begin{eqnarray}\label{kkk}
\Big|K_{e^{-t^2H}}(x, y)\Big| \leq C t^{-n} e^{-{|x-y|^2\over t^2}},
\end{eqnarray}
we see that  
\begin{eqnarray}\label{ttt}
\|G^{(2)}_{s,  H} f\|_{L^2(\RN)}\leq C\|f\|_{L^2(\RN)}.
\end{eqnarray}

 Assume that $f  \eta_m\in  W^{s, 2}_H(\RN) $ for every $m\in {\mathbb Z}_n$
and \eqref{e 2.9} holds.  Let us prove that $f  \in  W^{s, 2}_H(\RN)$  and 
\begin{eqnarray}\label{nn}
\|f\|_{W^{s, 2}_H(\RN)}\leq  C\left( \sum_{m\in {\mathbb Z}_n} \| f \eta_m \|_{W^{s, 2}_H(\RN)}^2 \right)^{1/2}.
\end{eqnarray}

We now prove \eqref{nn}.
From \eqref{ttt}, we use  the properties i), ii) and iii)  of $\eta$ to obtain
$$ 
 \|G^{(2)}_{s,  H} f\|^2_{L^2(\RN)}\leq C\|f \|^2_{L^2(\RR)} \leq C\sum\limits_{m\in {\mathbb Z}_n} \|f\eta_m\|^2_{L^2(\RR)}.
$$ 
Now, let $B_m:=\{x: \ |x-m|\leq 4\}$.   Since $\sum\limits_{m\in {\mathbb Z}_n} \eta_m(x)=c_0$ for all   $x\in \RN$, we see that 
$
  \|G^{(1)}_{s,  H} f\|^2_{L^2(\RN)}\leq (E+F)/c_0^2,
$
where 
$$
E:=  \int_0^{1}\int_{\RN} |\sum_{m\in{\mathbb Z}_n }(I-e^{-t^2H}) (f\eta_m)(x) \chi_{B_m}(x) |^2 {dxdt\over t^{1+2s}} 
 $$
and 
$$
F:= 
\int_0^{1}\int_{\RN} |\sum_{m\in{\mathbb Z}_n } e^{-t^2H}  (f\eta_m)(x) \chi_{B^c_m}(x) |^2 {dxdt\over t^{1+2s}} $$
For the term $E$, we have 
\begin{eqnarray*} 
E&\leq&  \sum_{m\in{\mathbb Z}_n } \int_0^{1}\int_{\RN} |(I-e^{-t^2H}) (f\eta_m)(x)   |^2 {dxdt\over t^{1+2s}} \\ 
 &\leq& C \sum_{m\in {\mathbb Z}_n} \| f \eta_m\|_{W^{s, 2}_H(\RN)}^2.  
\end{eqnarray*}
To estimate the term $F$, we note that 
\begin{eqnarray*}  
F
 &\leq&     \int_0^{1} \int_{\RN} \Big|\int_{\RN} |K_t(x,y) | \Big|\sum_{m}  f\eta_m (y)\Big| dy\Big|^2 dx  {dt\over t^{1+2s}}, \\
\end{eqnarray*}
where
$$
|K_t(x,y)|\leq C t^{-n} e^{-c{|x-y|^2\over t^2}} \chi_{\{|x-y|\geq 2\}}.
$$
 It follows that $F\leq C\sum\limits_{m\in {\mathbb Z}_n} \| f\eta_m\|_{L^2(\RR)}^2  $. Hence, estimates $E$ and $F$ yield that 
 $ \|G^{(1)}_{s,  H} f\|^2_{L^2(\RN)} \leq C\sum\limits_{m\in {\mathbb Z}_n} \| f \eta_m \|_{W^{s, 2}_H(\RN)}^2$.
This completes  the proof of the sufficiency part in the case $0<s<1$.
 
 Now we  prove the necessarity part of the proposition under a weaker condition on $\eta\in C_0^{\infty}(\RN)$. 
 Indeed, for every $f\in W^{s, 2}_H(\RN)$  we will show that if 
$$
  \eta \in C_0^{\infty}(\RN)  \ \ \ {\rm and}\ \ \ \sum\limits_{m\in {\mathbb Z}_n} |\eta_m(x)| \leq C, \ \ \ {\rm for\ all\ } x\in \RN,  
  $$
 then   $f\eta_m\in  W^{s, 2}_H(\RN) $ for every $m\in {\mathbb Z}_n$
 and 
 \begin{eqnarray}\label{eta}
\bigg( \sum_{m\in {\mathbb Z}_n} \| f \eta_m\|_{W^{s, 2}_H(\RN)}^2 \bigg)^{1/2}\leq C \| f  \|_{W^{s, 2}_H(\RN)}.
\end{eqnarray}  
 
 To prove \eqref{eta}, we  write 
\begin{eqnarray*} 
 \left( \int_0^{\infty} |\big(I -e^{-t^2H}  \big) (f\eta_m)(x)|^2 {dt\over t^{1+2s}} \right)^{1/2}  
 \leq  I_m(x) +J^{(1)}_m(x) + J^{(2)}_m(x),
\end{eqnarray*}
where
\begin{eqnarray*} 
I_m(x)&:=&\left( \int_0^{\infty} \big|\big(I -e^{-t^2H}  \big)f(x) \eta_m(x) \big|^2 {dt\over t^{1+2s}} \right)^{1/2},\\
J^{(1)}_m(x)&:=& \left( \int_0^{1} | e^{-t^2H} (f\eta_m)(x)-e^{-t^2H} (f)(x)\eta_m(x) |^2 {dt\over t^{1+2s}} \right)^{1/2},\\ 
J^{(2)}_m(x)&:=& \left( \int_1^{\infty} | e^{-t^2H} (f\eta_m)(x)-e^{-t^2H} (f)(x)\eta_m(x) |^2 {dt\over t^{1+2s}} \right)^{1/2}. 
\end{eqnarray*}
Now 
\begin{eqnarray*} 
 &&\hspace{-1cm}\sum_{m\in {\mathbb Z}_n} \int_{\RN} |I_m(x)|^2 dx\\
 &\leq& 
 \int_{\RN}\int_0^{\infty} \big|\big(I -e^{-t^2H}  \big)f(x) \big|^2  
 \left(\sum_{m\in {\mathbb Z}_n} |\eta_m(x) |^2\right) {dt\over t^{1+2s}}  dx \\
 &\leq& 
  C\|f\|_{W^{s, 2}_H(\RN)}.
\end{eqnarray*}
For the term $J^{(1)}_m$,  we apply \eqref{kkk} to get 
 \begin{align*}  
  &\sum_{m\in {\mathbb Z}_n} \int_{\RN} |J^{(1)}_m(x)|^2 dx  \\
 &\leq \sum_{m\in {\mathbb Z}_n} \int_{\RN}  \int_0^{1}  \bigg( \int_{\RN}  t^{-n} e^{-{|y|^2\over t^2}}  
 \left|f(x+y)(\eta_m(x+y)-\eta_m(x))\right| dy\bigg)^2 {dt\over t^{1+2s}} dx.
\end{align*}
For each $x$ the inner is non-zero for at most $7^n$ distanct $m$'s; namely when $|x_i +m_i|\leq 3$. Fix one such $m_i$.
By the mean value theorem $|\eta_m(x+y)-\eta_m(x)|=|y| |\nabla\eta(x_0)|\leq M |y|$ where $M=\|\nabla\eta\|_{L^\infty(\RR)}$. Hence,
 \begin{eqnarray*}  
  &&\hspace{-1cm}\sum_{m\in {\mathbb Z}_n} \int_{\RN}  |J^{(1)}_m(x)|^2 dx  \\
 &\leq& C   \int_{\RN} 
 \int_0^{1}  \left( \int_{\RN}  t^{-n} e^{-{|y|^2\over t^2}}  |f(x+y) | \left({|y|\over t}\right)  dy\right)^2 {dt\over t^{2s-1}} dx\\ 
  &\leq& C \|f\|_{L^2(\RR)}^2 
\end{eqnarray*} 
when $0\leq s<1$. 
Further, we use the property \eqref{kkk} of  the kernel $ K_{e^{-t^2H}}(x, y)$  to see that 
$ 
  \sum_{m\in {\mathbb Z}_n} \int_{\RN}  |J^{(2)}_m(x)|^2 dx  
 \leq  \|f\|_{L^2(\RR)}^2. 
$
Estimates of $I_m$, $J^{(1)}_m$ and $ J^{(2)}_m$ together, give the proof of the necessarity part in the case $0<s<1$.

Thus the proposition is proved for $0\leq s<1.$
For $s\geq 1$, we will prove it by induction.  Suppose the proposition is true for $k\leq s< k+1,$ where $k$ is an integer.  Let $k+1\leq s< k+2.$ 
By Lemma~\ref{lem qq} and the assumption
\begin{align}\label{c4}
	\|f\|_{W_H^{s,2}(\RR)}^2
&\approx \sum_j \|H_jf\|_{W_H^{s-1,2}(\RR)}^2+\|f\|_{L^2(\RR)} \notag \\
&\approx \sum_j\sum_{m\in\mathbb{Z}^n}\|(H_jf)\eta_m\|^2_{W_H^{s-1,2}(\RR)}+\|f\|_{L^2(\RR)}.
\end{align}
A direct calculation shows
\begin{equation}
	H_j(f\eta_m)= (H_jf)\eta_m\pm f\partial_j\eta_m.
\end{equation}
Hence, (\ref{c4}) is controlled by
\begin{align*}
& C\bigg( \sum_j\sum_{m\in\mathbb{Z}^n} \|H_j(f\eta_m)\|^2_{W_H^{s-1,2}(\RR)}+ \sum_j\sum_{m\in\mathbb{Z}^n} \|f\partial_j\eta_m\|_{W_H^{s-1,2}(\RR)}^2\\
&\qquad+\|f\|_{L^2(\RR)} \bigg)\\
&\leq C\bigg( \sum_{m\in\mathbb{Z}^n} \|f\eta_m\|^2_{W_H^{s,2}(\RR)}+  \|f\|_{W_H^{s-1,2}(\RR)}^2 \bigg)\\
&\leq C\bigg( \sum_{m\in\mathbb{Z}^n} \|f\eta_m\|^2_{W_H^{s,2}(\RR)}+  \sum_{m\in\mathbb{Z}^n} \|f\eta_m\|^2_{W_H^{s-1,2}(\RR)}  \bigg) \\
&\leq C \sum_{m\in\mathbb{Z}^n} \|f\eta_m\|^2_{W_H^{s,2}(\RR)},
\end{align*}
where we use Lemma~\ref{lem qq} and the assumption again. Thus the sufficiency is proved for $k+1\leq s<k+2$.

On the other hand, for every $\eta$  in  $C_0^\infty(\mathbb R^n)$, we obtain that
\begin{align*}
&	\sum_{m\in\mathbb{Z}^n} \|f\eta_m\|^2_{W_H^{s,2}(\RR)} \\
&\approx \sum_j \sum_{m\in\mathbb{Z}^n} \|H_j(f\eta_m)\|^2_{W_H^{s-1,2}(\RR)}   \\
&\leq C\bigg(  \sum_j \sum_{m\in\mathbb{Z}^n} \|(H_jf)\eta_m\|^2_{W_H^{s-1,2}(\RR)}+ \sum_j \sum_{m\in\mathbb{Z}^n} \|f\partial_j\eta_m\|^2_{W_H^{s-1,2}(\RR)}  \bigg)  \\
&\leq C\bigg( \sum_j \|H_jf\|_{W_H^{s-1,2}(\RR)}^2+  \|f\|_{W_H^{s-1,2}(\RR)}^2  \bigg)  \\
&\leq C\|f\|_{W_H^{s,2}(\RR)}^2.
\end{align*}
This shows Proposition~~\ref{prop2.9} also holds for $k+1\leq s< k+2$. 
\end{proof}

\begin{lem}
	$\mathcal{M}(W_{H}^{s,2}(\R^n))\subset \mathcal{M}(W_{H}^{t,2}(\R^n))$ if $s\geq t\geq 0$. In particular, 
	$\mathcal{M}(W_{H}^{s,2}(\R^n))\subset L^{\infty}(\RN)$ if $s\geq 0.$
\label{prop7}
\end{lem}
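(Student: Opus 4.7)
The plan is to establish $\mathcal{M}(W_H^{s,2}(\R^n))\subset \mathcal{M}(W_H^{t,2}(\R^n))$ for $s\geq t\geq 0$ via a duality argument followed by complex interpolation on the Hilbert scale $\{W_H^{\sigma,2}(\R^n)\}_{\sigma\in\R}$, and then obtain the ``in particular'' assertion by specializing to $t=0$.

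First I would record three structural facts on the scale, all of which are immediate once $W_H^{\sigma,2}(\R^n)$ is identified via the Hermite expansion $f=\sum_\alpha \langle f,h_\alpha\rangle\, h_\alpha$ with the weighted sequence space on $\mathbb{N}_0^n$ of weight $(2|\alpha|+n)^\sigma$: (i) each $W_H^{\sigma,2}(\R^n)$ is a Hilbert space, with $W_H^{0,2}(\R^n)=L^2(\R^n)$; (ii) the extended $L^2$-pairing gives $(W_H^{\sigma,2}(\R^n))^*=W_H^{-\sigma,2}(\R^n)$; and (iii) since the $h_\alpha$ are real valued, complex conjugation is a conjugate-linear isometry on every $W_H^{\sigma,2}(\R^n)$.

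Next, given $g\in\mathcal{M}(W_H^{s,2}(\R^n))$ with multiplier norm $M$, the identity $\langle gf,h\rangle=\langle f,\bar g h\rangle$ shows that the Hilbert-space adjoint of the multiplication operator $M_g$ on $W_H^{s,2}(\R^n)$ is $M_{\bar g}$ acting on $W_H^{-s,2}(\R^n)$; hence $M_{\bar g}$ is bounded on $W_H^{-s,2}(\R^n)$ with norm at most $M$, and applying the conjugation isometry upgrades this to $\|M_g\|_{W_H^{-s,2}\to W_H^{-s,2}}\leq M$. I would then invoke complex interpolation between weighted $\ell^2$ spaces (in the Hermite basis) to obtain the scale identity
$$
[W_H^{s,2}(\R^n),\,W_H^{-s,2}(\R^n)]_\theta = W_H^{(1-2\theta)s,2}(\R^n),\qquad \theta\in[0,1],
$$
so the multiplier bound propagates with the same constant $M$ to every intermediate space. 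Choosing $\theta=(s-t)/(2s)\in[0,1/2]$ for $t\in[0,s]$ produces $W_H^{t,2}(\R^n)$, so $g\in\mathcal{M}(W_H^{t,2}(\R^n))$; this is the first assertion. The ``in particular'' statement then follows by setting $t=0$ and using $\mathcal{M}(L^2(\R^n))=L^\infty(\R^n)$.

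The only technical point that requires care is the identification of the complex interpolation space $[W_H^{s,2},W_H^{-s,2}]_\theta$ and, closely related, the precise meaning of $M_g$ and its adjoint on the negative-order space $W_H^{-s,2}(\R^n)$. Both issues dissolve after transporting to the Hermite basis, where the spaces become weighted $\ell^2$ spaces with weights that are log-linear in the interpolation parameter (so the interpolation identity is classical), and $M_g$ extends unambiguously by density from finite Hermite expansions.
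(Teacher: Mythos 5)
Your proposal is correct, but it takes a genuinely different route from the paper. The paper first proves directly that any $m\in\mathcal{M}(W_H^{s,2}(\RR))$ is bounded, via a power trick: applying the multiplier bound to $m^N f$ gives $\|m^Nf\|_{L^2(\RR)}^{1/N}\leq\|m\|_{\mathcal{M}(W_H^{s,2}(\RR))}\|f\|_{W_H^{s,2}(\RR)}^{1/N}$, while on the set $\Omega=\{|m|>\|m\|_{\mathcal{M}(W_H^{s,2}(\RR))}+1\}$ the left-hand side stays above $(\|m\|_{\mathcal{M}(W_H^{s,2}(\RR))}+1)\bigl(\int_\Omega|f|^2\bigr)^{1/2N}$, so letting $N\to\infty$ forces $|\Omega|=0$; it then interpolates the multiplication operator between $W_H^{s,2}(\RR)$ (multiplier hypothesis) and $L^2(\RR)$ (boundedness of $m$) to reach $W_H^{t,2}(\RR)$ for $0\leq t\leq s$. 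You instead manufacture the second endpoint by duality: the adjoint of $M_g$ under the pairing $(W_H^{s,2})^*=W_H^{-s,2}$ is $M_{\bar g}$, conjugation symmetry of the Hermite scale turns this into a bound for $M_g$ on $W_H^{-s,2}(\RR)$, and Stein--Weiss interpolation of the weighted $\ell^2$ spaces in the Hermite basis gives all $W_H^{t,2}(\RR)$ with $|t|\leq s$, recovering the $L^\infty$ statement at $t=0$ from $\mathcal{M}(L^2)=L^\infty$. Your route buys the clean quantitative conclusions $\|g\|_{\mathcal{M}(W_H^{t,2}(\RR))}\leq\|g\|_{\mathcal{M}(W_H^{s,2}(\RR))}$ (even for negative $t$) and $\|g\|_{L^\infty(\RR)}\leq\|g\|_{\mathcal{M}(W_H^{s,2}(\RR))}$, and it avoids the ad hoc power argument; the paper's order has the advantage that once $m\in L^\infty$ is known, multiplication by $m$ is an honest bounded operator on all of $L^2(\RR)$, so the interpolated operator is trivially multiplication by $m$. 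In your order that identification is slightly less automatic: since $g$ is not yet known to be bounded, the operator on $W_H^{-s,2}(\RR)$ is only an abstract density extension, and you should add one sentence checking that the interpolated operator on $W_H^{t,2}(\RR)$ is again multiplication by $g$ (e.g.\ note $g\in L^2_{\mathrm{loc}}$ because $g\,h_0\in L^2$, approximate $f\in W_H^{t,2}$ by truncated Hermite expansions, and pass to an a.e.\ convergent subsequence). With that sentence supplied, your argument is complete.
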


\begin{proof}
Let  $C_{0}^{\infty}(\R^n)$ be the space of smooth functions with compact support on $\R^n$.
Suppose $m\in \mathcal{M}(W_{H}^{s,2}(\RR))$, then for any positive integer $N$ and any $f\in C_{0}^{\infty}(\R^n)$ we have
$$
\|m^{N}f\|_{L^{2}(\RR)}^{\frac{1}{N}}\leq \|m^{N}f\|_{W_{H}^{s,2}(\RR)}^{\frac{1}{N}}\leq\|m\|_{\mathcal{M}(W_{H}^{s,2}(\RR))}\|f\|_{W_{H}^{s,2}(\RR)}^{\frac{1}{N}}.
$$
In addition, for $\Omega:=\{x\in \RR:\ |m(x)|>\|m\|_{\mathcal{M}(W_{H}^{s,2}(\RR))}+1\}$ we have
$$
	\|m^{N}f\|_{L^{2}(\RR)}^{\frac{1}{N}} \geq (\|m\|_{\mathcal{M}(W_{H}^{s,2}(\RR))}+1) \Big( \int_\Omega |f(x)|^2dx \Big)^{1\over 2N}.
$$

We first claim $m\in L^\infty(\R^n)$. If it's not true, then $\Omega$ 
is of positive measure and hence one can find a proper function $f\in C_{0}^{\infty}(\R^n)$ such that $\int_\Omega |f(x)|^2dx> 0$. Let $N\to\infty$, 
then there is a obvious contradiction.

The left of the proof is to use interpolation to the operator of multiplication by $m$ between $W_{H}^{s,2}(\RR)$ and $L^2(\RR)$.
\end{proof}

For $s\ge0$,  let $W^{s,2}(\RR)$ be the classical Sobolev space, i.e., 
$$
	W^{s,2}({\RR})=\big\{f:  \|f\|_{W^{s,2}(\RR)}= \|(1+|\cdot|^2)^{s\over 2}\mathcal{F}f\|_{L^2(\RR)}<\infty\big\}.
$$

\begin{lem}
For $s\ge0$, we have that 
$W_{H}^{s,2}(\RR)\subset W^{s,2}(\RR)$. In particular, if $s>n/2$, then
$W_{H}^{s,2}(\RR)\subset L^{\infty }(\mathbb{R}^{n})$.
\label{prop9}
\end{lem}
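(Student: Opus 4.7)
The plan is to first establish the continuous inclusion $W_H^{s,2}(\RR)\subset W^{s,2}(\RR)$ by comparing the norms $\|H^{s/2}f\|_{L^2}$ and $\|(I-\Delta)^{s/2}f\|_{L^2}$, and then to deduce the $L^\infty$ statement from the classical Sobolev embedding $W^{s,2}(\RR)\hookrightarrow L^\infty(\RR)$ valid for $s>n/2$. The case $s=0$ is trivial since both spaces coincide with $L^2(\RR)$.

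For $s=k$ a positive integer, the first step is to invoke Lemma~\ref{lem1} to replace $\|f\|_{W_H^{k,2}}$ by the equivalent $\widetilde W_H^{k,2}$-norm built from the first-order operators $H_{\pm j}=\pm\partial_j+x_j$. Because $\partial_j=\tfrac{1}{2}(H_j-H_{-j})$, and the commutators $[H_j,H_k]=[H_{-j},H_{-k}]=0$ and $[H_j,H_{-k}]=2\delta_{jk}$ are all scalar, every classical partial derivative $\partial^\alpha$ with $|\alpha|\le k$ can be rewritten as a finite linear combination of products $H_{j_1}\cdots H_{j_m}$ with $m\le|\alpha|$, modulo terms of lower order controlled by induction. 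This yields $\|\partial^\alpha f\|_{L^2}\le C\|f\|_{W_H^{k,2}}$ for every $|\alpha|\le k$, and hence the inclusion at integer level.

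For non-integer $s>0$ lying between two consecutive integers, my plan is to interpolate. Both $\{W^{s,2}(\RR)\}_{s\ge 0}$ and $\{W_H^{s,2}(\RR)\}_{s\ge 0}$ form complex interpolation scales, since they are graph-norm spaces of fractional powers of the positive self-adjoint operators $I-\Delta$ and $H$; the integer-level inclusion interpolates to every real $s\ge 0$. A more self-contained alternative is to observe that
$$\langle(I-\Delta)f,f\rangle\le\|f\|_{L^2}^2+\langle Hf,f\rangle\le\Bigl(1+\tfrac{1}{n}\Bigr)\langle Hf,f\rangle$$
since $H\ge nI$, which gives $(I-\Delta)\le CH$ in the quadratic-form sense; then operator monotonicity of $t\mapsto t^s$ on $[0,1]$ (L\"owner--Heinz) together with an iteration yields $\|(I-\Delta)^{s/2}f\|_{L^2}\le C\|H^{s/2}f\|_{L^2}$ for all $s\ge 0$.

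With the inclusion $W_H^{s,2}(\RR)\subset W^{s,2}(\RR)$ in hand, the second assertion is immediate from the classical Sobolev embedding $W^{s,2}(\RR)\hookrightarrow L^\infty(\RR)$ for $s>n/2$. The main obstacle in the plan is passing from integer $s$ to all real $s\ge 0$; this is precisely why I provide two routes (complex interpolation of the scales, or the form inequality $(I-\Delta)\le CH$ combined with operator monotonicity and iteration), either of which closes the argument.
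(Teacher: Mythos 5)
Your argument is correct in substance, but it follows a genuinely different route from the paper. The paper does not treat integer and fractional orders separately: it uses the fact that the Fourier transform is unitary on $W_H^{s',2}(\RR)$ (Lemma~\ref{lem5}, proved via the Bargmann transform), so $f\in W_H^{s,2}(\RR)$ gives $\mathcal F f\in W_H^{s',2}(\RR)$ for all $0\le s'\le s$, and then applies the Bongioanni--Torrea bound of Lemma~\ref{lem6} (boundedness of $|x|^{2s'}H^{-s'}$ on $L^2$) to get $|\xi|^{s'}\mathcal F f\in L^2(\RR)$ for all $0\le s'\le s$, hence $(1+|\xi|^2)^{s/2}\mathcal F f\in L^2(\RR)$; this settles every real $s\ge 0$ in one stroke, using only lemmas already quoted in the paper, and the $L^\infty$ statement then follows from the classical Sobolev embedding exactly as in your last step. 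Your route instead proves the integer case directly from Lemma~\ref{lem1} via $\partial_j=\frac12(H_j-H_{-j})$ (this part is fine; in fact the expansion of $\partial^\alpha$ into products of the $H_{\pm j}$ is exact, so no lower-order remainder or commutator bookkeeping is needed) and then passes to fractional $s$ by interpolation. The complex-interpolation route does close the argument, since $[L^2,D(A^{k/2})]_\theta=D(A^{k\theta/2})$ for a positive self-adjoint $A\ge cI>0$ is classical, applied to $A=H$ and $A=I-\Delta$; what it buys is independence from Lemma~\ref{lem6}, at the cost of importing interpolation machinery the paper avoids. One caveat: your second, ``self-contained'' alternative is not complete as stated. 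The form inequality $(I-\Delta)\le(1+\tfrac1n)H$ together with L\"owner--Heinz yields $\|(I-\Delta)^{s/2}f\|_{L^2}\le C\|H^{s/2}f\|_{L^2}$ only for $0\le s\le 1$; operator monotonicity of $t\mapsto t^{s}$ fails for $s>1$, and since $I-\Delta$ and $H$ do not commute there is no evident ``iteration'' that upgrades the range, so as written that variant does not reach $s>1$ and the weight of the fractional case rests on the interpolation argument (or on the paper's Fourier-transform argument).
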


\begin{proof}
Let $f\in W_{H}^{s,2}(\R^n)$. It suffices 
to show that $(1+|\xi|^2)^{s\over 2}\mathcal{F}f(\xi)$ is an $L^2(\R^n)$ function. 
Actually we know $f$ belongs to $W_{H}^{s^\prime,2}(\R^n)$ for all $0\leq s^\prime\leq s$, then so does 
$\mathcal{F}f$ since the Fourier transform is an isometry on $W_H^{s^\prime,2}(\R^n)$. Hence it follows from 
Lemma \ref{lem6} that
$$
	|\xi|^{s^\prime}\mathcal{F}f(\xi)\in L^2(\R^n) \quad \text{for all}\quad 0\leq s^\prime\leq s,
$$
which implies $(1+|\xi|^2)^{s\over 2}\mathcal{F}f(\xi)\in L^2(\R^n)$. This shows $f\in W^{s,2}(\RR)$.
Thus $W_{H}^{s,2}(\RR)\subset W^{s,2}(\RR)$.

As for the $s>n/2$ case, by Sobolev embedding theorem we know $W^{s,2}(\R^n)\subset L^{\infty }(\mathbb{R}^{n})$.
\end{proof}

\begin{lem}[Leibniz's Rule]
Let $s\geq 0$. Then for any $f,g\in \widetilde{W}_H^{s,2}(\RR)$ and any integer $k\leq s$, we have the generalized Leibniz's rule
\begin{align*}
&H_{j_{1}}\cdots H_{j_{k}}(fg)\\
&=\sum_{h+l+m= k}\sum_{1\leq |j'_{1}|\leq n,\cdots ,1\leq |j'_{l}|\leq n}
\sum_{1\leq |j''_{1}|\leq n,\cdots ,1\leq |j''_{m}|\leq n}\\
&\qquad\qquad\qquad\qquad p_{h}(x) 
 (H_{j'_{1}}\cdots H_{j'_{l}}f)(H_{j''_{1}}\cdots H_{j''_{m}}g)\\
&=\sum_{|\alpha |+m= k}
\sum_{1\leq|j'_{1}|\leq n,\cdots ,1\leq |j'_{m}|\leq n}(\frac{\partial^{\alpha } }{\partial x^{\alpha }}f)(H_{j'_{1}}\cdots H_{j'_{m}}g)\\
&=\sum_{h+|\alpha |+|\beta |=k}p_{h}(\frac{\partial^{\alpha } }{\partial x^{\alpha }}f)(\frac{\partial^{\beta } }{\partial x^{\beta }}g),
\end{align*}
where $p_{h}$ is a polynomial in $x$ of order $h$.
\label{prop8}
\end{lem}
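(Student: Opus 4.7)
My plan is to prove the three equivalent forms simultaneously by induction on $k$, taking the second form (pure partial derivatives on $f$, pure $H$-operator words on $g$) as the master identity and deriving the first and third from it by algebraic conversion based on the decomposition $H_j = \varepsilon_j\partial_{|j|} + x_{|j|}$ with $\varepsilon_j := \mathrm{sgn}(j)\in\{\pm 1\}$. For the base case $k=1$, a direct computation using this decomposition together with the ordinary Leibniz rule for $\partial_{|j|}$ yields the two dual identities
$$
H_j(fg) \;=\; \varepsilon_j(\partial_{|j|}f)\,g + f\,(H_j g) \;=\; (H_jf)\,g + \varepsilon_j\, f\,(\partial_{|j|}g),
$$
which establish $k=1$ in all three forms (with $p_0 = \varepsilon_j$ serving as a degree-zero $p_h$ in the first and third forms).

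For the inductive step of the second form, I would apply $H_{j_1}$ to the inductive hypothesis $\sum_{|\alpha|+m=k-1} c_{\alpha,j'}\,(\partial^\alpha f)(H_{j'_1}\cdots H_{j'_m}g)$ and invoke the base case on each summand. Each product splits into two contributions: one in which $H_{j_1}$ acts on $\partial^\alpha f$ as $\varepsilon_{j_1}\partial_{|j_1|}$, raising $|\alpha|$ by one and leaving $m$ fixed; and one in which $H_{j_1}$ is prepended to the $H$-string on $g$, leaving $|\alpha|$ fixed and raising $m$ by one. In both cases $|\alpha'|+m'=k$, which gives the second form at level $k$.

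To pass from the second form to the third, I would expand each word $H_{j'_1}\cdots H_{j'_m}g$ by distributing $H_j = \varepsilon_j\partial_{|j|}+x_{|j|}$ and moving all polynomial factors to the left via the commutator $[\partial_i, x_j]=\delta_{ij}$. A straightforward induction on $m$ produces $H_{j'_1}\cdots H_{j'_m}g = \sum_{|\beta|\le m} q_{m-|\beta|}(x)\,\partial^\beta g$ with $\deg q_{m-|\beta|}\le m-|\beta|$; substitution then delivers the third form with total order $h+|\alpha|+|\beta|=k$. The first form follows from the third by the inverse substitution $\varepsilon_j\partial_{|j|} = H_j - x_{|j|}$ applied to each $\partial^\alpha f$ and $\partial^\beta g$, consolidating the resulting polynomial prefactors into a single $p_h$.

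The main obstacle will be the \emph{bookkeeping of polynomial degrees} through the noncommutativity $[\partial_i,x_j]=\delta_{ij}$: every exchange of an $H_j$ for $\partial_{|j|}$ (or vice versa) contributes a lower-order polynomial term that must be reabsorbed into $p_h$ without violating the degree constraints $h+l+m=k$, $|\alpha|+m=k$, and $h+|\alpha|+|\beta|=k$ in the respective forms. An induction on the length of the $H$-word, tracking the total order $h+|\alpha|+m$, handles this cleanly and simultaneously validates the three degree constraints.
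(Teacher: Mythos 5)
Your proposal is correct, and it uses the same basic toolkit as the paper's proof --- induction on the length of the $H$-word, the ordinary product rule, and the decomposition $H_j=\pm\partial_{|j|}+x_{|j|}$ --- but it is organized along a genuinely different route. The paper inducts directly on the \emph{first} (most general) form: its base identity is the symmetric one, $H_j(fg)=(H_jf)g+f(H_jg)-x_{|j|}fg$, and in the inductive step it rewrites $\pm\partial_{|j|}$ as $H_j-x_{|j|}$, absorbing the resulting lower-order polynomials (terms like $p_{h-1}-p_{h+1}$) into the coefficient on the spot; the second and third equalities are then obtained only by ``minor modifications,'' which the paper does not spell out. You instead anchor the induction at the \emph{second} form, using the asymmetric identity $H_j(fg)=\varepsilon_j(\partial_{|j|}f)g+f(H_jg)$, which keeps the inductive step entirely free of polynomial bookkeeping (coefficients remain signs/constants), and you then derive the third and first forms by normal-ordering $H$-words through $[\partial_i,x_j]=\delta_{ij}$, so all the degree bookkeeping is isolated in one routine conversion step. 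Each route buys something: the paper's single induction establishes the hardest form at once, while yours makes the other two equalities explicit rather than leaving them as remarks. Two harmless caveats: your second form carries signs $\varepsilon_j$ and constants, whereas the display has coefficient $1$ --- this is the same schematic looseness already present in the paper's statement and proof; and your conversions naturally give total order $h+|\alpha|+|\beta|\le k$ rather than $=k$, which again matches how the paper itself treats the order of $p_h$.
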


\begin{proof}
We only give the proof  for the first equality since   the other two equalities  can be
obtained by a minor modifications with it.

 For any $1\leq |j|\leq n,$
\begin{align*}
H_{j}(fg)&=\pm\frac{\partial }{\partial x_{|j|}}(fg)+x_{|j|}fg\\
&=\pm[(\frac{\partial }{\partial x_{|j|}}f)g+f(\frac{\partial }{\partial x_{|j|}}g)]+x_{|j|}fg\\
&=[\pm(\frac{\partial }{\partial x_{|j|}}f)+x_{|j|}f]g+f[\pm(\frac{\partial }{\partial x_{|j|}}g)+x_{|j|}g]-x_{|j|}fg\\
&=(H_{j}f)g+f(H_{j}g)-x_{|j|}fg.
\end{align*}
Assume the first equality holds for any $k'<k$, and for convenience, we omit the subscripts of the sum.
Then for any $1\leq |j|\leq n$,
\begin{align*}
 &H_{j}H_{j_{1}}\cdots H_{j_{k'}}(fg) \\
&=H_{j}\sum 
p_{h}(x)\cdot (H_{j'_{1}}\cdots H_{j'_{l}}f)(H_{j''_{1}}\cdots H_{j''_{m}}g)]\\
&=
\sum H_{j}[p_{h}(x)\cdot (H_{j'_{1}}\cdots H_{j'_{l}}f)(H_{j''_{1}}\cdots H_{j''_{m}}g)]\\
&=
\sum \pm\frac{\partial }{\partial x_{|j|}}[p_{h}(x)\cdot (H_{j'_{1}}\cdots H_{j'_{l}}f)(H_{j''_{1}}\cdots H_{j''_{m}}g)]\\
&\quad+ x_{|j|}p_{h}(x)\cdot (H_{j'_{1}}\cdots H_{j'_{l}}f)(H_{j''_{1}}\cdots H_{j''_{m}}g)\\
&=
\sum [\pm\frac{\partial }{\partial x_{|j|}}p_{h}(x)]\cdot (H_{j'_{1}}\cdots H_{j'_{l}}f)(H_{j''_{1}}\cdots H_{j''_{m}}g)\\
&\quad+p_{h}(x)\cdot [\pm\frac{\partial }{\partial x_{|j|}}(H_{j'_{1}}\cdots H_{j'_{l}}f)](H_{j''_{1}}\cdots H_{j''_{m}}g)\\
&\quad+ p_{h}(x)\cdot (H_{j'_{1}}\cdots H_{j'_{l}}f)[\pm\frac{\partial }{\partial x_{|j|}}(H_{j''_{1}}\cdots H_{j''_{m}}g)]\\
&\quad+ x_{|j|}p_{h}(x)\cdot (H_{j'_{1}}\cdots H_{j'_{l}}f)(H_{j''_{1}}\cdots H_{j''_{m}}g)\\
&=
\sum [p_{h-1}(x)+ x_{|j|}p_{h}(x)] \cdot (H_{j'_{1}}\cdots H_{j'_{l}}f)(H_{j''_{1}}\cdots H_{j''_{m}}g)\\
&\quad+p_{h}(x)\cdot [(H_{j}-x_{|j|})(H_{j'_{1}}\cdots H_{j'_{l}}f)](H_{j''_{1}}\cdots H_{j''_{m}}g)\\
&\quad+ p_{h}(x)\cdot (H_{j'_{1}}\cdots H_{j'_{l}}f)[(H_{j}-x_{|j|})(H_{j''_{1}}\cdots H_{j''_{m}}g)]\\
&=
\sum [p_{h-1}(x)-p_{h+1}(x)]\cdot (H_{j'_{1}}\cdots H_{j'_{l}}f)(H_{j''_{1}}\cdots H_{j''_{m}}g)\\
&\quad+ p_{h}(x)\cdot (H_{j}H_{j'_{1}}\cdots H_{j'_{l}}f)(H_{j''_{1}}\cdots H_{j''_{m}}g)\\
&\quad+ p_{h}(x)\cdot (H_{j'_{1}}\cdots H_{j'_{l}}f)(H_{j}H_{j''_{1}}\cdots H_{j''_{m}}g),
\end{align*}
where  $h+l+m+1=k^\prime+1$. By mathematical induction, the proof of the first equality is complete. 
The proof of Lemma~\ref{prop8} is complete. 
\end{proof}

\begin{prop}
Let $k>n/2$ be an integer. Then we have

 \begin{itemize}
\item[(i)] 
$W_{H}^{k,2}(\RR)$ is an algebra and
$W_{H}^{k,2}(\RR)\subset \mathcal{M}(W_{H}^{k,2}(\RR))$.

\item[(ii)]   $W^{k,2}(\RR)\subset \mathcal{M}(W_{H}^{k,2}(\RR))$.

\item[(iii)]  $\mathcal{M}(W^{k,2}(\RR))\subset \mathcal{M}(W_{H}^{k,2}(\RR))$.
\end{itemize}
\label{thm10}
\end{prop}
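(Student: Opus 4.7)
My strategy reduces all three parts to earlier tools: the Leibniz-type identity of Lemma~\ref{prop8}, the norm equivalence $\|f\|_{W_H^{k,2}(\RR)}\simeq\|f\|_{\widetilde{W}_H^{k,2}(\RR)}$ of Lemma~\ref{lem1} (together with the bound $\|x^\gamma\partial^\beta f\|_{L^2}\lesssim\|f\|_{W_H^{|\gamma|+|\beta|,2}}$ it implies for $|\gamma|+|\beta|\le k$), and the localization $\|f\|_{W_H^{k,2}}^2\simeq\sum_{m\in\ZZ^n}\|f\eta_m\|_{W_H^{k,2}}^2$ of Proposition~\ref{prop2.9}. I will also invoke the Sobolev chain $W_H^{s,2}\hookrightarrow W^{s,2}\hookrightarrow L^{r(s)}$ (Lemma~\ref{prop9} plus classical Sobolev embedding), where $L^{r(s)}=L^\infty$ once $s>n/2$.

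For (i) and (ii) I would expand $H_{j_1}\cdots H_{j_m}(fg)$ via Lemma~\ref{prop8} into a linear combination of terms $p_h(x)(\partial^\alpha f)(\partial^\beta g)$ with $h+|\alpha|+|\beta|\le m\le k$. For (ii) I would absorb the polynomial $p_h$ into the Hermite-Sobolev factor, so that $p_h\partial^\alpha f\in W_H^{k-h-|\alpha|,2}$ and $\partial^\beta g\in W^{k-|\beta|,2}$. H\"older then gives $\|p_h\partial^\alpha f\cdot\partial^\beta g\|_{L^2}\le\|p_h\partial^\alpha f\|_{L^p}\|\partial^\beta g\|_{L^q}$ for any conjugate pair $1/p+1/q=1/2$. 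Using the Sobolev embeddings above, the sum of the reciprocals of the widest available exponents is $1-\big((k-h-|\alpha|)+(k-|\beta|)\big)/n\le 1-k/n<1/2$, and the strict inequality, which is precisely the hypothesis $k>n/2$, guarantees that a valid conjugate pair $(p,q)$ with $p\in[2,p^*]$ and $q\in[2,q^*]$ can always be chosen. The same calculation works in (i) with $g\in W_H^{k,2}$, and the algebra conclusion is equivalent to the stated multiplier inclusion $W_H^{k,2}\subset\mathcal M(W_H^{k,2})$.

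For (iii) the preceding strategy is not directly available, because a Sobolev multiplier $g\in\mathcal M(W^{k,2})$ need not have its derivatives controlled in any Lebesgue space. My plan is to bootstrap from (ii) via localization: pick an enlarged bump $\widetilde\eta_m\in C_0^\infty(\RR)$ equal to one on $\operatorname{supp}\eta_m$, so that $(gf)\eta_m=(g\widetilde\eta_m)(f\eta_m)$, and apply (ii) to each piece to obtain
\[
\|gf\|_{W_H^{k,2}}^2\simeq\sum_m\|(g\widetilde\eta_m)(f\eta_m)\|_{W_H^{k,2}}^2\lesssim\sum_m\|g\widetilde\eta_m\|_{W^{k,2}}^2\,\|f\eta_m\|_{W_H^{k,2}}^2.
\]
Because $\widetilde\eta_m\in W^{k,2}$ is a translate of one fixed bump, the multiplier hypothesis yields $\|g\widetilde\eta_m\|_{W^{k,2}}\le\|g\|_{\mathcal M(W^{k,2})}\|\widetilde\eta_m\|_{W^{k,2}}$ uniformly in $m$, and a second application of Proposition~\ref{prop2.9} closes the estimate. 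The principal technical point of the whole argument is really the borderline H\"older arithmetic in (i)-(ii) in the range $n/2<k\le n$, where no single factor can be placed in $L^\infty$ and one must instead rely on the fractional Sobolev embedding; once that is handled, the localization argument for (iii) runs almost mechanically from (ii) and Proposition~\ref{prop2.9}.
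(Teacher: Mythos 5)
Your proposal is correct and follows essentially the same route as the paper: parts (i) and (ii) via the Leibniz rule of Lemma~\ref{prop8}, absorbing polynomial weights into the Hermite--Sobolev factor, and then Sobolev embedding plus H\"older with exactly the arithmetic forced by $k>n/2$; part (iii) via the localization $(gf)\eta_m=(g\widetilde\eta_m)(f\eta_m)$, the translation invariance of $\|\widetilde\eta_m\|_{W^{k,2}}$, and two applications of Proposition~\ref{prop2.9}. The only cosmetic difference is that you use the $p_h(\partial^\alpha f)(\partial^\beta g)$ form of the Leibniz expansion where the paper uses the $H_j$-product forms, which is immaterial.
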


\begin{proof}
Since $W_{H}^{k,2}(\RR)=\widetilde{W}_H^{k,2}(\RR)$ by Lemma \ref{lem1}, we work with functions $f, g\in \widetilde{W}_H^{k,2}(\RR)$.
By the Leibniz's rule, for all $0\leq k^\prime\leq k$, we have
\begin{align*}
&H_{j_{1}}\cdots H_{j_{k^\prime}}(fg)\\
&=\sum_{h+l+m= k^\prime}\sum_{1\leq |j'_{1}|\leq n,\cdots ,1\leq |j'_{l}|\leq n}
\sum_{1\leq |j''_{1}|\leq n,\cdots ,1\leq |j''_{m}|\leq n}p_{h}(x) \\
&\qquad\qquad \cdot(H_{j'_{1}}\cdots H_{j'_{l}}f)(H_{j''_{1}}\cdots H_{j''_{m}}g).
\end{align*}
It follows from Lemma \ref{lem6} and Lemma \ref{prop9} that
$$p_{h}H_{j'_{1}}\cdots H_{j'_{l}}f\in \widetilde{W}_H^{k-(h+l),2}(\RR)\subset W^{k-(h+l),2}(\RR)$$
and
$$H_{j''_{1}}\cdots H_{j''_{m}}g\in \widetilde{W}_H^{k-m,2}(\RR)\subset W^{k-m,2}(\RR).$$

Let $f_0= p_{h}H_{j'_{1}}\cdots H_{j'_{l}}f$ and $g_0= H_{j''_{1}}\cdots H_{j''_{m}}g$. 
If $k-(h+l)>\frac{n}{2}$ or $k-m>\frac{n}{2}$, then $f_0g_0\in L^2$ since one of $f_0$ and $g_0$ 
is bounded. If $k-(h+l),k-m\leq \frac{n}{2}$, then by Sobolev 
embedding theorem we know $f_0\in L^q$ for ${1\over 2}-{k-(h+l)\over n}\leq {1\over q}< {1\over 2}$ and $g_0\in L^r$ for 
${1\over 2}-{k-m\over n}\leq {1\over r}< {1\over 2}$. Since $h+l+m=k^\prime\leq k$, we can choose $q$ and $r$ such that 
${1\over 2}= {1\over q}+ {1\over r}$, hence by  H\"older's inequality, 
\begin{align*}
	\|f_0g_0\|_{L^2(\RR)}\leq \|f_0\|_{L^q} \|g_0\|_{L^r(\RR)}
	&\leq C\|f_0\|_{W^{k-(h+l),2}(\RR)} \|g_0\|_{W^{k-m,2}(\RR)}\\
	&\leq C\|f\|_{\widetilde{W}_H^{k,2}(\RR)}\|g\|_{\widetilde{W}_H^{k,2}(\RR)}.
\end{align*}
This shows $fg\in \widetilde{W}_H^{k,2}(\RR)$ and $f,g\in \mathcal{M}(\widetilde{W}_H^{k,2}(\RR))$, which implies that $W_{H}^{k,2}(\RR)= \widetilde{W}_H^{k,2}(\RR)$ is an algebra.
This proves (i).

\medskip
We now prove (ii). Let $f\in W^{k,2}(\RR)$. For any $g\in \widetilde{W}_H^{k,2}(\RR)$ we have
$$
H_{j_{1}}\cdots H_{j_{k}}(fg)
=\sum_{|\alpha |+m= k}
\sum_{1\leq|j'_{1}|\leq n,\cdots ,1\leq |j'_{m}|\leq n}(\frac{\partial^{\alpha } }{\partial x^{\alpha }}f)(H_{j'_{1}}\cdots H_{j'_{m}}g)
$$
by Lemma~\ref{prop8}. Similar to the proof above, we see that
$$
(H_{j'_{1}}\cdots H_{j'_{m}}g)\in \widetilde{W}_H^{k-m,2}(\RR)\subset W^{k-m,2}(\RR),
$$
and
$$\frac{\partial^{\alpha } }{\partial x^{\alpha }}f\in W^{k-|\alpha|,2}(\RR)= W^{m,2}(\RR).
$$
Thus $H_{j_{1}}\cdots H_{j_{k}}(fg)\in L^{2}(\mathbb{R}^{n})$.
Similarly, for any $k'\leq k$, we have $H_{j_{1}}\cdots H_{j_{k'}}(fg)\in L^{2}(\mathbb{R}^{n})$.
Hence $fg \in \widetilde{W}_H^{k,2}(\RR)=W_{H}^{k,2}(\RR)$.

Now we turn to prove (iii). Let ${\widetilde \eta}\in C_0^{\infty}$ such that ${\widetilde \eta}(x)=1 $ if $|x|\leq 2$; $0$ if $|x|\geq 3$. 
 Denote by ${\widetilde \eta}_m(x)= {\widetilde \eta}(x+m)$. 
 Let $f\in \mathcal{M}(W^{k,2}(\RR))$ and $g\in W_{H}^{k,2}(\RR)$. Since ${\widetilde \eta}_m\in W^{k,2}(\RR)$, we know from (ii) that
$$
	f{\widetilde \eta}_m\in W^{k,2}(\RR)\subset \mathcal{M}(W_{H}^{k,2}(\RR)).
$$
Then it follows from Proposition~\ref{prop2.9} that
\begin{align*}
	\|fg\|^2_{W_{H}^{k,2}(\RR)}
&\leq C\sum_{m\in \mathbb{Z}_n} \|fg\eta_m{\widetilde \eta}_m\|_{W_{H}^{k,2}(\RR)}^2  \\
&\leq C\sum_{m\in \mathbb{Z}_n} \|f{\widetilde \eta}_m\|^2_{W^{k,2}(\RR)} \|g \eta_m\|^2_{W_{H}^{k,2}(\RR)}\\
&\leq C\|f\|_{\mathcal{M}(W^{k,2}(\RR))}^2 \sum_{m\in \mathbb{Z}_n} \|g\eta_m\|^2_{W_{H}^{k,2}(\RR)}\\
&\leq C\|f\|_{\mathcal{M}(W^{k,2}(\RR))}^2 \|g\|^2_{W_{H}^{k,2}(\RR)}
\end{align*}
This shows $f\in \mathcal{M}(W_{H}^{k,2}(\RR))$.

The proof of Proposition \ref{thm10} is complete.
\end{proof}


In the end of this section we 
study the multipliers on the Hermite-Sobolev  spaces by localizations. 
Let   $\eta$ be a function as 
in Proposition~\ref{prop2.9}.  Recall  that   for the classical  Sobolev spaces $W^{s,2}(\RN)$,
$$
\|\eta(x+m)\|_{W^{s,2}(\RR)}=\|\eta(x)\|_{W^{s,2}(\RR)} 
$$
for every $m\in {\mathbb Z}_n$.
In \cite[Corollary 3.3]{S1},    Strichartz proved the following well-known result that   
  $f\in \mathcal{M}(W^{s,2}(\RR))$ if and only if 
$ f(x)\eta(x+m)\in \mathcal{M}(W^{s,2}(\RR))$ for all $m\in {\mathbb Z}_n$ and
$$
\sup_{m\in {\mathbb Z}_n}\|f\eta_m\|_{\mathcal{M}(W^{s,2}(\RR))}<\infty.
$$
The supremum is equivalent to $\|f\|_{\mathcal{M}(W^{s,2}(\RR))}.$
From  Theorem 2.1 and  Corollary 2.2 in \cite{S1}, we know that for $s> n/2$,  $W^{s,2}(\RR)\subseteq \mathcal{M}(W^{s,2}(\RR))$,
and so    $f\in \mathcal{M}(W^{s,2}(\RR))$ if and only if $f$ is a 
  uniformly local function in the sense of norms in $W^{s,2} $, i.e.,
   $\|f\eta_m\|_{W^{s,2}(\RR)}\leq C$ for all  $m\in {\mathbb Z}_n$.

Turning to the Hermite-Sobolev spaces, we have 
\begin{align*}
&\|\eta (x+m)\|_{\widetilde{W}_H^{k,2}(\RR)}\\
&=\sum_{1\leq |j|\leq n}\|H_{j}\eta (x+m)\|_{L^{2}(\RR)}
+\|\eta (x+m)\|_{L^{2}(\RR)}\nonumber\\
&\geq \sum_{1\leq |j|\leq n}\bigg(\|x_{|j|}\eta (x+m)\|_{L^{2}(\RR)}-\| \frac{\partial }{\partial x_{|j|}}\eta \|_{L^{2}(\RR)}\bigg)-\|\eta \|_{L^{2}(\RR)}\nonumber\\
&=  \sum_{1\leq |j|\leq n}\bigg(\|(x_{|j|}-m_{|j|})\eta (x)\|_{L^{2}(\RR)}-\| \frac{\partial }{\partial x_{|j|}}\eta\|_{L^{2}(\RR)}\bigg)\nonumber\\
&\qquad-\|\eta \|_{L^{2}(\RR)}\nonumber\\
&\geq \sum_{1\leq |j|\leq n}|m_{|j|}|\|\eta (x)\|_{L^{2}(\RR)}-\sum_{1\leq |j|\leq n}
\|x_{|j|}\eta (x)\|_{L^{2}(\RR)}\nonumber\\
&\qquad-\sum_{1\leq |j|\leq n}\| \frac{\partial }{\partial x_{|j|}}\eta \|_{L^{2}(\RR)}-\|\eta \|_{L^{2}(\RR)}\nonumber\\
&\rightarrow  \infty \hskip10mm \mbox{as}\hskip 5mm |m|\rightarrow \infty .
\end{align*}
This shows although $1\in \mathcal{M}(W_{H}^{s,2}(\RR))$, $\|1\cdot\eta_m\|_{W_{H}^{s,2}(\RR)}$ cannot be controlled by a constant, which is different from the case of Sobolev multipliers. 
We can also find a function which is not a multiplier of  $W^{s,2}(\RR)$ and not uniformly local in the sense of norms in $W_{H}^{s,2}(\RR)$, but it is a multiplier of $W_{H}^{s,2}(\RR)$. In fact, if $h(x)=e^{ix^{\frac43}}$, then $h$ is not a uniformly
local function  but $h$ is a multiplier of $W_{H}^{1,2}(\mathbb R)$. To see this, note that
for any $f\in W_{H}^{1,2}(\mathbb R)$ we have
\begin{eqnarray*}
H(hf)&=&(hf)'+x(hf)\\
&=&\frac{4}{3}x^{\frac{1}{3}}e^{ix^{\frac43}}f+e^{ix^{\frac{4}{3}}}f'+xe^{ix^{\frac{4}{3}}}f\\
&=&\left(\frac{4}{3}x^{\frac{1}{3}}+x\right)e^{ix^{\frac{4}{3}}}f+e^{ix^{\frac{4}{3}}}f'.
\end{eqnarray*}
Since $f\in W_{H}^{1,2}(\mathbb R)$, we see that $xf\in L^{2}(\mathbb{R})$. Furthermore,
$$\left(\frac{4}{3}x^{\frac{1}{3}}+x\right)e^{ix^{\frac{4}{3}}}f\in L^{2}(\mathbb{R}).$$
Thus $H(hf)\in L^{2}(\mathbb{R}).$ This means that $h\in  \mathcal{M}(W_{H}^{1,2}(\mathbb R))$. However,
it is not difficult to see that $\|h\eta_{m}\|_{W_{H}^{1,2}(\mathbb R)}\rightarrow\infty$ as $m\to\infty$. We see easily that $h$ is not a multiplier of  $W^{1,2}$  since $\|h\eta_{m}\|_{W^{1,2}(\mathbb R)}\rightarrow \infty $ as $m\rightarrow \infty .$

To obtain multipliers on the  Hermite-Sobolev spaces, we have the following proposition.

 \begin{prop}\label{lem 2.14}
  Let $s\geq 0$. Then 
  $f\in {\mathcal M}\left(W^{s, 2}_H(\RN)\right)$ if and only if $f\eta_m \in {\mathcal M}\left(W^{s, 2}_H(\RN)\right)$
 uniformly in $m\in{\mathbb Z_n}$, i.e., 
 $$
 \sup_{m\in{\mathbb Z_n}}   \|f\eta_m\|_{{\mathcal M}\left(W^{s, 2}_H(\RN)\right)}   <\infty.
 $$
 The supremum is equipped to $\|f\|_{{\mathcal M}\left(W^{s, 2}_H(\RN)\right)}.$ 
 \end{prop}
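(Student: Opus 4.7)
My plan is to establish the two directions of the equivalence separately, with both relying on the localization characterization in Proposition~\ref{prop2.9}. The necessity direction is essentially bookkeeping; the sufficiency direction is the substantive one and requires an auxiliary cutoff so that the local multiplier bounds can be summed in $m$.

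For the necessity direction, assume $f\in \mathcal{M}(W_H^{s,2}(\RN))$. For any $g \in W_H^{s,2}(\RN)$ I would write $(f\eta_m)g = f(\eta_m g)$ and apply the multiplier hypothesis to obtain
$$\|(f\eta_m)g\|_{W_H^{s,2}(\RN)}\leq \|f\|_{\mathcal{M}(W_H^{s,2}(\RN))}\,\|\eta_m g\|_{W_H^{s,2}(\RN)}.$$
Since $\eta \in C_0^\infty(\RN)$ with $\sum_{m}|\eta_m|\leq C$, the necessity part of Proposition~\ref{prop2.9} bounds the single summand $\|\eta_m g\|_{W_H^{s,2}(\RN)}^2$ by the full sum $\sum_{m'}\|\eta_{m'}g\|_{W_H^{s,2}(\RN)}^2\leq C\|g\|_{W_H^{s,2}(\RN)}^2$, with $C$ independent of $m$. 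Combining yields $\sup_{m\in{\mathbb Z}_n}\|f\eta_m\|_{\mathcal{M}(W_H^{s,2}(\RN))}\leq C\|f\|_{\mathcal{M}(W_H^{s,2}(\RN))}$.

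For the sufficiency direction, set $M:=\sup_{m}\|f\eta_m\|_{\mathcal{M}(W_H^{s,2}(\RN))}$ and assume $M<\infty$. I would introduce a fattened cutoff $\widetilde\eta\in C_0^\infty(\RN)$ with $\widetilde\eta\equiv 1$ on $\{|x|\leq 2\}$ (hence on the support of $\eta$), and set $\widetilde\eta_m(x)=\widetilde\eta(x+m)$ so that $\eta_m\widetilde\eta_m=\eta_m$. Then the factorization
$$fg\eta_m=(f\eta_m)(g\widetilde\eta_m)$$
together with the uniform multiplier hypothesis gives $\|fg\eta_m\|_{W_H^{s,2}(\RN)}\leq M\,\|g\widetilde\eta_m\|_{W_H^{s,2}(\RN)}$. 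Squaring, summing in $m$, and applying the necessity part of Proposition~\ref{prop2.9} with the $C_0^\infty$ cutoff $\widetilde\eta$ yields
$$\sum_{m\in{\mathbb Z}_n}\|fg\eta_m\|_{W_H^{s,2}(\RN)}^2\leq CM^2\|g\|_{W_H^{s,2}(\RN)}^2.$$
Since $\eta$ satisfies the conditions i)--iv), the sufficiency part of Proposition~\ref{prop2.9} applied to $fg$ then gives $\|fg\|_{W_H^{s,2}(\RN)}^2\leq C\sum_m\|fg\eta_m\|_{W_H^{s,2}(\RN)}^2\leq C'M^2\|g\|_{W_H^{s,2}(\RN)}^2$, which is the desired multiplier estimate with norm comparable to $M$.

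The main obstacle is the temptation to take $\widetilde\eta=\eta$ or, equivalently, to use the trivial factorization $fg\eta_m=(f\eta_m)g$: that identity would only yield $\|fg\eta_m\|\leq M\|g\|$, whose square is not summable over $m\in{\mathbb Z}_n$. Introducing $\widetilde\eta_m$, identically one on the support of $\eta_m$, both preserves the local factorization and makes the factor $g\widetilde\eta_m$ on the right amenable to the summable localization estimate of Proposition~\ref{prop2.9}. The two-sided inequalities from both directions then combine to give $\|f\|_{\mathcal{M}(W_H^{s,2}(\RN))}\approx \sup_{m\in{\mathbb Z}_n}\|f\eta_m\|_{\mathcal{M}(W_H^{s,2}(\RN))}$, as asserted.
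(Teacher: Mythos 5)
Your proposal is correct and follows essentially the same route as the paper: the necessity direction uses $(f\eta_m)g=f(\eta_m g)$ together with the localization bound from Proposition~\ref{prop2.9}, and the sufficiency direction uses exactly the paper's fattened cutoff $\widetilde\eta_m$ (with $\eta_m\widetilde\eta_m=\eta_m$) to factor $fg\eta_m=(f\eta_m)(g\widetilde\eta_m)$, sum in $m$, and invoke both directions of Proposition~\ref{prop2.9}. Your remark about why the trivial factorization $(f\eta_m)g$ fails to be summable is precisely the point the paper's $\widetilde\eta_m$ is designed to address.
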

 
 \begin{proof}
 Let $f\in {\mathcal M}\left(W^{s, 2}_H(\RN)\right)$. Then for every $g\in W^{s,2}_H(\RN),$
 \begin{eqnarray*}  
 \|f\eta_m  g\|_{W^{s, 2}_H(\RN)}&\leq &C  \| \eta_m  g\|_{W^{s, 2}_H(\RN)}
 \leq   C 
 \|g\|_{W^{s, 2}_H(\RN)} 
\end{eqnarray*}
 with a constant $C$ independent of $m$, where  in the second inequality 
we used    Proposition~\ref{prop2.9}.
 
 \medskip
 
 Conversely, assume that  
 $$
  \sup_{m\in{\mathbb Z_n}}  \|f \eta_m\|_{{\mathcal M}\left(W^{s, 2}_H(\RN)\right)}   =: M<\infty.
 $$
Let $\eta\in C_0^{\infty}(\RR)$ such that ${\widetilde \eta}(x)=1$ on the cube $\{ |x|\leq 2\}$ 
and   ${\widetilde \eta}_m(x)= {\widetilde \eta}(x+m)$. 
We follow an argument as in Proposition~\ref{prop2.9} to show that  for every $g\in W^{s, 2}_H(\RN),$
   \begin{eqnarray*}  
 \|fg\|_{W^{s, 2}_H(\RN)}^2
 &\leq& C \sum_{m\in {\mathbb Z}_n} \| fg\eta_m {\widetilde \eta}_m\|_{W^{s, 2}_H(\RN)}^2 \\
 &\leq& CM^2 \sum_{m\in {\mathbb Z}_n} \|  g{\widetilde \eta}_m\|_{W^{s, 2}_H(\RN)}^2   \\
  &\leq&  CM^2 \|  g\|_{W^{s, 2}_H(\RN)}^2.
\end{eqnarray*} 
The proof of Proposition \ref{lem 2.14} is complete.
 \end{proof}

 As a consequence of Proposition~\ref{lem 2.14}, we have the following result.

 \begin{cor}\label{prop 2.15} Let $k>n/2$ be an integer. If 
  $$
 \sup_{m\in\mathbb{Z}_n} \left\|f\eta_m\right\|_{W^{k, 2}_H(\RN)}<\infty,  
  $$
then  $f\in {\mathcal M}(W^{k, 2}_H(\RN))$. 

 \end{cor}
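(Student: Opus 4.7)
The plan is to chain together the two main tools already in place: Proposition~\ref{thm10}(i), which gives the algebra property $W_H^{k,2}(\RR) \subset \mathcal{M}(W_H^{k,2}(\RR))$ when $k>n/2$, and Proposition~\ref{lem 2.14}, which reduces membership in $\mathcal{M}(W_H^{k,2}(\RR))$ to uniform control on the localized pieces $f\eta_m$ in the multiplier norm.

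First, I would inspect the proof of Proposition~\ref{thm10}(i) and record the quantitative estimate that is actually produced there, namely that for every $f,g \in W_H^{k,2}(\RR)$ with $k>n/2$ one has
\[
\|fg\|_{W_H^{k,2}(\RR)} \leq C\, \|f\|_{W_H^{k,2}(\RR)}\,\|g\|_{W_H^{k,2}(\RR)},
\]
with $C$ depending only on $k$ and $n$. Equivalently, the embedding $W_H^{k,2}(\RR) \hookrightarrow \mathcal{M}(W_H^{k,2}(\RR))$ is continuous, i.e.\ $\|h\|_{\mathcal{M}(W_H^{k,2}(\RR))} \leq C\,\|h\|_{W_H^{k,2}(\RR)}$ for every $h \in W_H^{k,2}(\RR)$.

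Next, I would apply this bound to each localized piece $h = f\eta_m$: by hypothesis $\sup_{m\in \mathbb{Z}_n} \|f\eta_m\|_{W_H^{k,2}(\RR)} =: M_0 < \infty$, so
\[
\sup_{m\in \mathbb{Z}_n} \|f\eta_m\|_{\mathcal{M}(W_H^{k,2}(\RR))} \leq C\sup_{m\in \mathbb{Z}_n}\|f\eta_m\|_{W_H^{k,2}(\RR)} \leq C M_0 < \infty.
\]
Finally, Proposition~\ref{lem 2.14} then gives $f \in \mathcal{M}(W_H^{k,2}(\RR))$ with $\|f\|_{\mathcal{M}(W_H^{k,2}(\RR))} \lesssim M_0$.

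There is essentially no obstacle here beyond verifying that the implicit constant in Proposition~\ref{thm10}(i) is independent of the particular functions (which it is, since its proof only uses the Leibniz rule, Lemma~\ref{lem6}, Sobolev embedding, and H\"older's inequality, all with absolute constants). The only mild subtlety is that $\eta_m$ is a translate and one might worry about growth of the embedding constant in $m$; but since the algebra estimate is applied to $f\eta_m$ as a single element of $W_H^{k,2}(\RR)$ (not as a product where one factor is $\eta_m$ with an unfriendly norm), no such dependence enters. This makes the corollary a direct combination of the two propositions.
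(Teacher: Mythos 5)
Your argument is correct and is essentially the same as the paper's own proof: invoke Proposition~\ref{thm10}(i) to bound the multiplier norm of each $f\eta_m$ by (a constant times) its $W^{k,2}_H(\RN)$ norm, then conclude via Proposition~\ref{lem 2.14}. Your extra remark that the embedding constant is uniform in $m$ is a harmless elaboration of what the paper leaves implicit.
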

 
 \begin{proof}
 It follows by Proposition~\ref{thm10} that for an integer $k>n/2$,
  we have that $W^{k, 2}_H(\RN)\subset {\mathcal M}(W^{k, 2}_H(\RN)),
 $ 
 and thus there exists some $M>0$ such that for every $m\in {\mathbb Z_n},$
 \begin{eqnarray*}
 \left\|f \eta_m\right\|_{{\mathcal M}(W^{k, 2}_H(\RN))} 
  &\leq& 
  \left\|f \eta_m\right\|_{W^{k, 2}_H(\RN)} 
   \leq 
  M.
\end{eqnarray*}
 Then we apply Proposition~\ref{lem 2.14} to obtain that $f\in {\mathcal M}(W^{k, 2}_H(\RN)).$
 \end{proof}

\begin{rem}
It would be interesting to establish  a necessary and sufficient condition for  $f$ to be in $\mathcal{M}(W^{s,2}_H(\RR))$
 $s>0$. To the best of our knowledge, it is not clear for us yet.
 \end{rem}

  \medskip

\section{Proof of Theorem~\ref{th1.1}}

In the following, for any $N>0$, we use $Q_{N}$ to denote the
cube centered at $0\in\mathbb{R}^{n}$ with side length $2N$. Let $\Delta =\cup \Delta_{l}$ be an
arbitrary partition of $Q_{N}$ and choose $x_{l}\in \Delta_{l}$ for each $l$. 
Suppose that  $f$ is a measurable function on $\mathbb{R}^{n}$.  We define the Riemann sum of $f$ as
$$
S_{\Delta }^{N}(f)=\sum_{l}f(x_{l})|\Delta_{l}|,
$$
where $|\Delta_{l}|$ denotes the volume of $\Delta_{l}$. Let $\text{diam}\,(\Delta_l)$
 denote the diameter of $\Delta_l$ and $\lambda:= \max\limits_l \text{diam}\,(\Delta_l)$. If
 $\lim\limits_{N\rightarrow\infty }\lim\limits_{\lambda \rightarrow 0}S_{\Delta }^{N}(f)$ 
exists, we say that $f$ is integrable on $\mathbb{R}^{n}$ and we write
\begin{eqnarray*}
\int_{\mathbb{R}^{n}}f\,dx&=&\lim_{N\rightarrow\infty}\int_{Q_N}f\,dx\\
&=&\lim\limits_{N\rightarrow\infty }\lim\limits_{\lambda \rightarrow 0}S_{\Delta }^{N}(f).
\end{eqnarray*}

\begin{lem}\label{prop15}
	Let $s\geq 0$ and $T$ be a bounded operator on $W_H^{s,2}(\R^n)$, which commutes with translations $\tau_a$ 
	for all $a\in\RR$. Then for $f,g\in C_0^\infty(\R^n)$, we have 
$$
	T(f\ast g)= Tf\ast g= f\ast Tg.
$$
\end{lem}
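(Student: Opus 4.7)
\medskip

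The plan is to realize $f\ast g$ as a Bochner/Riemann integral of translates of $f$, push $T$ through the Riemann sum using the commutation $T\tau_a=\tau_a T$ and the continuity of $T$ on $W_H^{s,2}(\R^n)$, and then recognize the limiting Riemann sum as $Tf\ast g$. Concretely, write
\[
f\ast g(x)=\int_{\R^n} g(y)\,\tau_y f(x)\,dy,
\]
and for a cube $Q_N\supset\mathrm{supp}(g)$ with a partition $\Delta=\cup\Delta_l$ and sample points $x_l\in\Delta_l$, set
\[
S_{\Delta}^N(x):=\sum_l g(x_l)\,\tau_{x_l}f(x)\,|\Delta_l|.
\]
I will first establish that $S_{\Delta}^N\to f\ast g$ in $W_H^{s,2}(\R^n)$ as $N\to\infty$ and $\lambda\to0$; then apply $T$ and pass to the limit.

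The first main step is the $W_H^{s,2}$-valued continuity of $y\mapsto\tau_y f$ for $f\in C_0^\infty(\R^n)$. Using the identity $\tau_y f-\tau_{y'}f=\tau_{y'}(\tau_{y-y'}f-f)$ together with \eqref{ppp}, it suffices to show $\|\tau_h f-f\|_{W_H^{s,2}(\R^n)}\to0$ as $h\to 0$. Transferring via the Bargmann transform, $\mathcal{B}\tau_h f=W_h\mathcal{B}f$, and the problem reduces to $\|W_h G-G\|_{F^{s,2}(\CC)}\to0$ for $G=\mathcal{B}f\in F^{s,2}(\CC)$. For a polynomial $G$ (a finite combination of $e_\alpha$) this is a direct computation using Lemma~\ref{lem3}, and Lemma~\ref{lem4} provides the uniform bound $\|W_h\|\leq C(1+|h|^{|s|})$ needed to pass to all of $F^{s,2}(\CC)$ by density. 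Consequently, on any compact set $K\subset\R^n$ the map $y\mapsto g(y)\tau_y f$ is continuous into $W_H^{s,2}(\R^n)$, with norm bounded uniformly by $C\|g\|_{L^\infty}\sup_{y\in K}(1+|y|^{|s|})\|f\|_{W_H^{s,2}(\R^n)}$.

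With this continuity in hand, the Riemann sum $S_\Delta^N$ for the compactly supported integrand converges to $\int_{\R^n}g(y)\tau_y f\,dy=f\ast g$ in the norm of $W_H^{s,2}(\R^n)$ as $\lambda\to0$ (uniform continuity of $y\mapsto g(y)\tau_y f$ on $Q_N$), and then the tails vanish as $N\to\infty$ since $\mathrm{supp}(g)\subset Q_N$ for large $N$. Applying the bounded linear operator $T$ and using the hypothesis $T\tau_{x_l}=\tau_{x_l}T$, we get
\[
T(S_{\Delta}^N)=\sum_l g(x_l)\,T(\tau_{x_l}f)\,|\Delta_l|=\sum_l g(x_l)\,\tau_{x_l}(Tf)\,|\Delta_l|.
\]
The left side converges to $T(f\ast g)$ in $W_H^{s,2}(\R^n)$ by continuity of $T$; the right side is exactly the Riemann sum associated to the $W_H^{s,2}$-valued integral $\int_{\R^n}g(y)\tau_y(Tf)\,dy=Tf\ast g$, which converges to $Tf\ast g$ by the same argument applied with $f$ replaced by $Tf\in W_H^{s,2}(\R^n)$. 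This yields $T(f\ast g)=Tf\ast g$. The identity $T(f\ast g)=f\ast Tg$ follows by symmetry of convolution $f\ast g=g\ast f$ and the first identity with $f$ and $g$ interchanged.

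The main obstacle is the norm-continuity of translation in $W_H^{s,2}(\R^n)$ for non-integer $s$; unlike the classical Sobolev setting, $H^{s/2}$ does not commute with $\tau_a$, so I cannot use a simple Fourier argument. The Bargmann transform bypasses this by replacing $\tau_a$ with the Weyl operator $W_a$, whose continuity at the identity on $F^{s,2}(\CC)$ is accessible via the polynomial density together with the bound from Lemma~\ref{lem4}. Once this piece is in place, everything else is a clean Riemann-sum/continuity argument.
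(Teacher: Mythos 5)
Your proof is correct, but it follows a genuinely different route from the paper's. The paper never proves norm-continuity of translations on $W_H^{s,2}(\R^n)$: it works with the scalar Riemann sums $S^N_\Delta(f,g)$ for $f,g\in C_0^\infty(\R^n)$, invokes the convergence $S^N_\Delta(f,g)\to f\ast g$ in the Schwartz space (citing Grafakos), upgrades this to convergence in $W_H^{k,2}(\R^n)$ for the integer $k=[s]+1$ via the characterization $\widetilde{W}_H^{k,2}=W_H^{k,2}$ (Lemma~\ref{lem1}), hence in $W_H^{s,2}(\R^n)$, and then, after applying $T$ and commuting it with the translates, identifies the limit $\lim\sum_l f(y_l)Tg(x-y_l)|\Delta_l|$ pointwise with $f\ast Tg(x)$ using only $Tg\in L^2$. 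Your argument instead establishes the stronger auxiliary fact that the translation group is strongly continuous on $W_H^{s,2}(\R^n)$, transferred through the Bargmann transform to continuity of $h\mapsto W_hG$ on $F^{s,2}(\CC)$ (direct computation on polynomials via Lemma~\ref{lem3}, then density plus the uniform bound of Lemma~\ref{lem4}), and then treats $f\ast g$ and $Tf\ast g$ as vector-valued Riemann/Bochner integrals converging in norm. What the paper's route buys is brevity, since it leans on the cited Schwartz-space convergence and the integer-order machinery already developed; what your route buys is a cleaner limiting procedure in which both sides are honest norm limits of the same Riemann sums (the paper's final pointwise identification is arguably looser, precisely because $Tg$ is merely $L^2$), and a reusable fact (strong continuity of $\tau_a$, valid for all of $W_H^{s,2}$, which you correctly need when replacing $f$ by $Tf$). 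The only point you gloss over is the identification of the $W_H^{s,2}$-valued integral $\int g(y)\tau_y(Tf)\,dy$ with the pointwise convolution $Tf\ast g$; this is standard (pair with an arbitrary $L^2$ function and apply Fubini), but it deserves a sentence.
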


\begin{proof}
Let $f,g\in C_0^\infty(\R^{n})$. 
It follows from the proof of  Theorem 2.3.20 in \cite{G1} 
that  $S_{\Delta }^N (f,g) \rightarrow f\ast g$ in the Schwarz space $\mathscr{S}$. This implies 
$$
	H_{j_{1}}\cdots H_{j_{k^\prime}}S_{\Delta }^N (f,g) \rightarrow H_{j_{1}}\cdots H_{j_{k^\prime}}
	 (f\ast g) \hskip 2mm \text{in}\hskip 2mm\ L^\infty \hskip 2mm \text{for}\hskip 2mm0\leq k^\prime\leq k\in \mathbb{N}.
$$
Since $f$ and $g$ have compact supports, we know
$$
	H_{j_{1}}\cdots H_{j_{k^\prime}}S_{\Delta }^N (f,g) \rightarrow H_{j_{1}}\cdots H_{j_{k^\prime}} (f\ast g) \qquad \text{in}\ L^2(\mathbb R^n),
$$
which means that $S_{\Delta }^N(f,g)\rightarrow f\ast g$ in $W_{H}^{k,2}(\mathbb R^n)$.
 Thus $S_{\Delta }^N(f,g)\rightarrow f\ast g$ in $W_{H}^{k,2}(\mathbb R^n)\subset W_{H}^{s,2}(\mathbb R^n)$ if one let $k= [s]+1$.

Since $T$ is bounded on $W_H^{s,2}(\R^n)$ and commutes with translations, we have
\begin{eqnarray*}
	T(f\ast g)(x)&=&T(\lim\limits_{N\rightarrow\infty }\lim\limits_{\lambda \rightarrow 0} S_{\Delta }^N(f,g))(x)\\
&=& \lim\limits_{N\rightarrow\infty }\lim\limits_{\lambda \rightarrow 0} T(S_{\Delta }^N(f,g))(x) \\
&=& \lim\limits_{N\rightarrow\infty }\lim\limits_{\lambda \rightarrow 0} \sum_l f(y_l)Tg(x-y_l)|\Delta_l|.
\end{eqnarray*}
Note $f\in C_0^\infty(\R^{n})$ and $Tg\in L^2(\mathbb R^n)$, which shows $f\ast g\in L^2(\mathbb R^n)$, i.e., the integral defining the convolution of $f$ and $g$ converges. So
$$
	\lim\limits_{N\rightarrow\infty }\lim\limits_{\lambda \rightarrow 0} \sum_l f(y_l)Tg(x-y_l)|\Delta_l|= f\ast Tg(x)\quad
$$
pointwisely in $x$. This shows $T(f\ast g)= f\ast Tg$.
\end{proof}

If $\mathcal{F}(f)$ denotes the Fourier transformation of $f$, then for $f,g\in C_c^\infty (\R^{n})$,
\begin{equation}\label{eq3}
\mathcal{F}(f)\mathcal{F}(Tg)=\mathcal{F}(Tf)\mathcal{F}(g).
\end{equation}

\begin{prop}
Let $s\geq 0$. Suppose $T$ is a bounded operator on $W_{H}^{s,2}(\RR)$. If $T$ commutes with all translations $\tau_{a}$,
$a\in \mathbb{R}^{n}$, on $W_{H}^{s,2}(\RR)$, then there is an $m\in \mathcal{M}(W_{H}^{s,2}(\RR))$ such that
\begin{equation}\label{eq4}
\mathcal{F}(Tf)=m\mathcal{F}(f),\qquad f\in W_{H}^{s,2}(\RR).
\end{equation}
Conversely, for any $m\in \mathcal{M}(W_{H}^{s,2}(\RR))$, $T=\mathcal{F}^{-1}M_{m}\mathcal{F}$ is bounded on $W_{H}^{s,2}(\RR)$
and commutes with translations on $\mathscr{S}(\RR)$, where $M_{m}f=mf$ for any $f\in W_{H}^{s,2}(\RR)$.
\label{thm16}
\end{prop}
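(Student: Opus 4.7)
The plan is to start from the functional equation (\ref{eq3}), $\mathcal{F}(f)\mathcal{F}(Tg)=\mathcal{F}(Tf)\mathcal{F}(g)$ for $f,g\in C_c^\infty(\RR)$, and extract a pointwise symbol $m$. For each $\xi_0\in\RR$ I would choose a test function $g\in C_c^\infty(\RR)$ whose Fourier transform is nonvanishing in a neighborhood of $\xi_0$ (for instance, a suitable Gaussian-type bump) and define $m(\xi):=\mathcal{F}(Tg)(\xi)/\mathcal{F}(g)(\xi)$ there. The identity (\ref{eq3}) guarantees that the definition is independent of the choice of $g$ on overlapping neighborhoods, so a patching argument produces a well-defined measurable $m$ on $\RR$ satisfying $\mathcal{F}(Tf)=m\mathcal{F}(f)$ for all $f\in C_c^\infty(\RR)$. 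Extending this identity to all $f\in W_H^{s,2}(\RR)$ is then routine, because Schwartz (and hence $C_c^\infty$) functions are dense in $W_H^{s,2}(\RR)$ through truncated Hermite expansions, while $T$ is bounded on $W_H^{s,2}$ and $\mathcal{F}$ is unitary on $W_H^{s,2}$ by Lemma~\ref{lem5}.

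To see that $m\in \mathcal{M}(W_H^{s,2}(\RR))$, I would take an arbitrary $h\in W_H^{s,2}(\RR)$ and use the unitarity of $\mathcal{F}$ on $W_H^{s,2}(\RR)$ to write $h=\mathcal{F}f$ with $f=\mathcal{F}^{-1}h\in W_H^{s,2}(\RR)$. Then $mh=m\mathcal{F}f=\mathcal{F}(Tf)$, and hence
$$
\|mh\|_{W_H^{s,2}(\RR)}=\|\mathcal{F}(Tf)\|_{W_H^{s,2}(\RR)}=\|Tf\|_{W_H^{s,2}(\RR)}\leq \|T\|\,\|f\|_{W_H^{s,2}(\RR)}=\|T\|\,\|h\|_{W_H^{s,2}(\RR)},
$$
which gives $m\in \mathcal{M}(W_H^{s,2}(\RR))$ with $\|m\|_{\mathcal{M}(W_H^{s,2}(\RR))}\leq\|T\|$.

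For the converse, given any $m\in\mathcal{M}(W_H^{s,2}(\RR))$ I would set $T:=\mathcal{F}^{-1}M_m\mathcal{F}$. Boundedness is immediate: applying the unitarity of $\mathcal{F}^{\pm 1}$ on $W_H^{s,2}(\RR)$ and the multiplier bound for $m$ yields
$$
\|Tf\|_{W_H^{s,2}(\RR)}=\|M_m\mathcal{F}f\|_{W_H^{s,2}(\RR)}\leq \|m\|_{\mathcal{M}(W_H^{s,2}(\RR))}\|f\|_{W_H^{s,2}(\RR)}.
$$
Commutation with translations on $\mathscr{S}(\RR)$ is then a direct consequence of the fact that $\mathcal{F}$ intertwines translation with modulation, $\mathcal{F}(\tau_a f)(\xi)=e^{-2ia\cdot\xi}\mathcal{F}(f)(\xi)$; since multiplication by the scalar function $e^{-2ia\cdot\xi}$ commutes with multiplication by $m$, we obtain $\mathcal{F}(T\tau_a f)=e^{-2ia\cdot\xi}m\mathcal{F}(f)=\mathcal{F}(\tau_a Tf)$, and inverting $\mathcal{F}$ gives $T\tau_a=\tau_a T$ on $\mathscr{S}(\RR)$.

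The main obstacle I anticipate is the construction of the symbol $m$ as a well-defined measurable function on all of $\RR$: the identity (\ref{eq3}) only determines quotients where the chosen Fourier transform is nonzero, so one must cover $\RR$ by a locally finite family of test functions whose Fourier transforms are nonvanishing on the respective pieces and verify that the local definitions glue consistently. Once this patching is in place, the rest of the argument is essentially a transcription, via the unitarity of $\mathcal{F}$ on $W_H^{s,2}(\RR)$, of the classical Fourier-multiplier characterization of translation-invariant operators.
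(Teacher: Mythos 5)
Your proposal is correct, and its overall strategy is the same as the paper's: exploit the convolution identity \eqref{eq3}, define $m$ as a quotient $\mathcal{F}(Tg)/\mathcal{F}(g)$, get the multiplier bound from the unitarity of $\mathcal{F}$ on $W_H^{s,2}(\RR)$ (Lemma~\ref{lem5}) together with $\|mh\|_{W_H^{s,2}}=\|Tf\|_{W_H^{s,2}}\le\|T\|\,\|h\|_{W_H^{s,2}}$, and invert the construction for the converse. The one genuine point of divergence is how $m$ is produced: the paper first extends \eqref{eq3} from $C_0^\infty(\RR)$ to all $f,g\in W_H^{s,2}(\RR)$ by density, boundedness of $T$, and a.e.\ convergent subsequences of the Fourier transforms, and then chooses a single $g\in W_H^{s,2}(\RR)$ whose Fourier transform has no zeros, so that $m=\mathcal{F}(Tg)/\mathcal{F}(g)$ is defined globally in one stroke and no patching is needed; you instead stay with $C_c^\infty$ test functions, define $m$ locally near each $\xi_0$ and glue (the gluing is indeed consistent a.e.\ by \eqref{eq3}), and only afterwards extend the identity $\mathcal{F}(Tf)=m\mathcal{F}(f)$ to $W_H^{s,2}(\RR)$ by the same density argument. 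Both routes work; the paper's buys simplicity at the symbol-construction step at the cost of first upgrading \eqref{eq3} beyond compactly supported functions, while yours keeps \eqref{eq3} as stated but must carry out the covering/patching you flag as the main obstacle. One small caveat: your illustrative ``Gaussian-type bump'' should be an honest $C_c^\infty$ bump (e.g.\ a modulated cutoff), since a genuine Gaussian is not compactly supported — although if you allowed it (after the density upgrade of \eqref{eq3}, as the paper does) the patching would become unnecessary. Finally, your explicit verification that $\mathcal{F}^{-1}M_m\mathcal{F}$ commutes with $\tau_a$ via $\mathcal{F}(\tau_af)(\xi)=e^{-2ia\cdot\xi}\mathcal{F}f(\xi)$ supplies a detail the paper's proof leaves unsaid.
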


\begin{proof}
For any $f\in W_{H}^{s,2}(\R^n)$ there is a sequence $\{f_{j}\}\subset C_0^\infty(\R^{n})$, such that
$$
	\|f_{j}-f\|_{W_{H}^{s,2}(\RR)}\rightarrow 0, \quad j\to\infty.
$$
Since $T$ is bounded on $W_{H}^{s,2}(\R^n)$, we see that
$$
	\|Tf_{j}-Tf\|_{W_{H}^{s,2}(\RR)}\rightarrow 0, \quad j\to\infty.
$$
Consequently,
$$
	\|\mathcal{F}(f_{j})-\mathcal{F}(f)\|_{L^2(\RR)},\ \|\mathcal{F}(Tf_{j})-\mathcal{F}(Tf)\|_{L^2(\RR)}\rightarrow 0, \quad j\to\infty.
$$

Then we can find subsequences of $\{\mathcal{F}(f_{j})\}$ and $\{\mathcal{F}(Tf_{j})\}$, 
which are still denoted by $\{\mathcal{F}(f_{j})\}$ and $\{\mathcal{F}(Tf_{j})\}$ respectively,
 such that $\mathcal{F} (f_{j})\rightarrow \mathcal{F}(f)$ a.e. and $ \mathcal{F}(Tf_{j})\rightarrow \mathcal{F}(Tf)$ 
 a.e.. By (\ref{eq3}), we see that for any $g\in C_0^\infty(\R^{n})$,
$$
	\mathcal{F}(f_j)\mathcal{F}(Tg)=\mathcal{F}(Tf_j)\mathcal{F}(g).
$$
Let $j\rightarrow\infty$, we have
$$
	\mathcal{F}(f)\mathcal{F}(Tg)=\mathcal{F}(Tf)\mathcal{F}(g)\hskip 5mm a.e.
$$
By the same token, there still holds
$$
	\mathcal{F}(f)\mathcal{F}(Tg)=\mathcal{F}(Tf)\mathcal{F}(g)\hskip 5mm a.e.
$$
for all $f,g\in W_H^{s,2}(\R^n)$. We may choose some $g\in W_{H}^{s,2}(\R^n)$ such that $\mathcal{F}(g)$ has no zeros
on $\mathbb{R}^{n}$ and let $m=\mathcal{F}(Tg)/\mathcal{F}(g)$. Then
$$
\mathcal{F}(Tf)=m\mathcal{F}(f),\qquad f\in W_{H}^{s,2}(\R^n).
$$
Note that
$$
\|\mathcal{F}(Tf)\|_{ W_{H}^{s,2}(\RR)}=\|Tf\|_{ W_{H}^{s,2}(\RR)}\leq \|T\|\|f\|_{ W_{H}^{s,2}(\RR)}=
\|T\|\|\mathcal{F}(f)\|_{ W_{H}^{s,2}(\RR)}.
$$
Thus
$$
\|m\mathcal{F}(f)\|_{W_{H}^{s,2}(\RR)}\leq \|T\|\|\mathcal{F}(f)\|_{ W_{H}^{s,2}(\RR)}.
$$
This shows that $m\in \mathcal{M}( W_{H}^{s,2}(\R^n))$.

Conversely, if $m\in \mathcal{M}( W_{H}^{s,2}(\R^n))$, then $T$ defined by (\ref{eq4}) is bounded.

The proof of Proposition \ref{thm16} is complete.
\end{proof}

\begin{prop}
Let $s\geq 0.$  Suppose that $T$ is a bounded operator on $W_{H}^{s,2}(\RR)$ and it
commutes with all translations $\tau_{a}$,
$a\in \mathbb{R}^{n}$, on $W_{H}^{s,2}(\RR)$. 
  If
$$\mathcal{F}(Tf)(x)= m(x)\mathcal{F}(f)(x), \qquad f\in W_{H}^{s,2}(\RR),$$
with an $m\in \mathcal{M}(W_{H}^{s,2}(\RR))$, then for every $g\in F^{s,2}(\mathbb{C}^{n})$,
$$
\mathcal{B}T\mathcal{B}^{-1}g(z)= \int_{\mathbb{C}^{n}}
g(w)e^{z\cdot \bar{w}}\varphi(z-\bar{w})\,d\lambda(w),\qquad z\in\mathbb{C}^{n},
$$
where
$$
\varphi(z)= \left({2\over \pi}\right)^{n\over2} \int_{\mathbb{R}} m(x)e^{-2(x-\frac{i}{2}z)^2} dx\in F^{s,2}(\CC).
$$
\label{thm18}
\end{prop}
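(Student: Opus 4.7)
The plan is to split the proof into two stages: first establish the integral kernel representation of $\mathcal{B}T\mathcal{B}^{-1}$, then verify that the resulting symbol $\varphi$ lies in $F^{s,2}(\mathbb{C}^n)$.

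For the first stage, I would begin by invoking Lemma~\ref{prop7}: since $s \geq 0$ and $m \in \mathcal{M}(W_H^{s,2}(\mathbb R^n))$, we automatically have $m \in L^\infty(\mathbb R^n)$. Hence $T = \mathcal{F}^{-1}M_m\mathcal{F}$ is bounded on $L^2(\mathbb R^n)$, and $\widetilde T := \mathcal{B}T\mathcal{B}^{-1}$ is bounded on $F^2(\mathbb C^n)$. The identity $\tau_a = \mathcal{B}^{-1}W_a\mathcal{B}$ noted in Section~2, together with $T\tau_a = \tau_a T$, shows that $\widetilde T$ commutes with every Weyl operator $W_a$. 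This is exactly the setting of the $s=0$ result of \cite{CJSWY}, from which the integral representation
$$\widetilde T g(z) = \int_{\mathbb{C}^n} g(w) e^{z\cdot\bar w}\varphi(z-\bar w)\,d\lambda(w), \qquad g \in F^2(\mathbb{C}^n),$$
follows with the stated $\varphi$. Since $F^{s,2}(\mathbb{C}^n) \subset F^2(\mathbb{C}^n)$, the identity applies to every $g \in F^{s,2}(\mathbb{C}^n)$. A self-contained derivation, if preferred, starts from $\mathcal{B}^{-1}g(x) = (2/\pi)^{n/4}\int g(w)e^{2x\bar w - x^2 - \bar w^2/2}d\lambda(w)$, pushes $T$ under the integral via $T[e^{-(x-\bar w)^2}](x) = \psi(x-\bar w)$ where $\psi(x) := T[e^{-|\cdot|^2}](x) = \pi^{-n/2}\int m(\xi)e^{-\xi^2 + 2ix\xi}d\xi$ (the translation identity extending from real $a$ to complex $\bar w$ by holomorphy of $\psi$), and then carries out the Gaussian $x$-integration after applying $\mathcal{B}$; the algebraic identity $(z-\bar w)^2/2 + z\cdot\bar w = (z^2 + \bar w^2)/2$ recasts the resulting exponent into the asserted form $e^{z\cdot\bar w}\varphi(z-\bar w)$.

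For the second stage, to show $\varphi \in F^{s,2}(\mathbb C^n)$, I would expand $-2(x - iz/2)^2 = -2x^2 + 2ixz + z^2/2$ and compare with the Bargmann transform formula~\eqref{e2.1}, obtaining the clean identification
$$\varphi(z) = \left(\tfrac{2}{\pi}\right)^{n/4} \mathcal{B}\bigl(m\, e^{-|\cdot|^2}\bigr)(iz).$$
The Gaussian $e^{-|x|^2}$ lies in $\mathscr S(\mathbb R^n) \subset W_H^{s,2}(\mathbb R^n)$, so the multiplier hypothesis on $m$ gives $me^{-|\cdot|^2} \in W_H^{s,2}(\mathbb R^n)$. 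Since $\mathcal{B}$ is unitary from $W_H^{s,2}(\mathbb R^n)$ to $F^{s,2}(\mathbb C^n)$ and $f(z) \mapsto f(iz)$ is unitary on $F^{s,2}(\mathbb C^n)$ (as used in the proof of Lemma~\ref{lem5}), the membership $\varphi \in F^{s,2}(\mathbb C^n)$ follows.

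The main obstacle I anticipate is the interchange underlying the direct derivation: moving $T$ inside the $\mathcal{B}^{-1}$-integral and extending the translation intertwining from real shifts to complex $\bar w$. This is made tractable by the fact that $T$ acts on the Gaussian $e^{-x^2}$ by producing an entire function, so the extension is justified by the identity principle rather than by heavy distributional machinery. Invoking \cite{CJSWY} entirely bypasses this step, but at the cost of citing an operator-theoretic characterization whose proof itself relies on similar square-completion computations.
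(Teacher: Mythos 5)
Your proposal is correct, and the interesting part is that the second stage diverges from the paper's argument. For the kernel representation you do exactly what the paper does: it also just says ``following the argument of Lemma 3.4 in \cite{CJSWY}'' (your preliminary reduction via Lemma~\ref{prop7}, $m\in L^\infty$, and $F^{s,2}(\CC)\subset F^{2}(\CC)$ makes that citation airtight). Where you differ is the membership $\varphi\in F^{s,2}(\CC)$, which is the bulk of the paper's proof. The paper writes $z=u+iv$, observes $\varphi(z)=C\,\mathcal{F}^{-1}\big[m(\cdot-\tfrac v2)e^{-2(\cdot)^2}\big](u)\,e^{|u|^2/2}$, and then combines the embedding $W_H^{s,2}(\RR)\subset W^{s,2}(\RR)$ (Lemma~\ref{prop9}), the translation bound \eqref{ppp} applied to the shifted Gaussians $e^{-2(x+\frac v2)^2}$, the multiplier hypothesis for each shift, and finally the weighted Fock norm of Lemma~\ref{lem3} with an integration in $v$ against $e^{-|v|^2}$. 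You instead complete the square once to get the identity $\varphi(z)=(2/\pi)^{n/4}\,\mathcal{B}\big(m\,e^{-|\cdot|^2}\big)(iz)$ (which checks out: $-2(x-\tfrac i2 z)^2=-2x^2+2ixz+\tfrac{z^2}{2}$ matches the Bargmann kernel at $iz$ with the extra factor $e^{-x^2}$), and then use that $e^{-|x|^2}\in\mathscr S\subset W_H^{s,2}(\RR)$, the multiplier hypothesis applied to this single Gaussian, unitarity of $\mathcal{B}:W_H^{s,2}(\RR)\to F^{s,2}(\CC)$, and the fact (already asserted in the proof of Lemma~\ref{lem5}, since $e_\alpha(iz)=i^{|\alpha|}e_\alpha(z)$) that $f(z)\mapsto f(iz)$ is unitary on $F^{s,2}(\CC)$. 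Your route is shorter and cleaner: it avoids the real--imaginary splitting, the translation estimate, and the weighted-norm integration, and it even yields the same quantitative bound $\|\varphi\|_{F^{s,2}(\CC)}\leq C\|m\|_{\mathcal{M}(W_H^{s,2}(\RR))}$ with $C=(2/\pi)^{n/4}\|e^{-|\cdot|^2}\|_{W_H^{s,2}(\RR)}$; what the paper's more pedestrian computation buys is independence from the specific algebraic coincidence that the symbol is exactly a rotated Bargmann transform of $m$ times a Gaussian.
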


\begin{proof}
Following an argument of Lemma 3.4 in \cite{CJSWY}, we  obtain 
$$\varphi(z)= \left({2\over \pi}\right)^{n\over2} \int_{\mathbb{R}} m(x)e^{-2(x-\frac{i}{2}z)^2} dx 
$$ 
in terms of  $m\in  \mathcal{M}(W_{H}^{s,2}(\RR))$.
 By Proposition~\ref{thm16}, it suffices to show that 
 $\varphi(z) \in F^{s,2}(\CC)$ 
for $m\in\mathcal{M}(W_{H}^{s,2}(\RR))$. To show it,   for $z\in\mathbb{C}^{n}$ we  write $z=u+iv$, and the key observation is 
the following:
$$
	\varphi(z)= C\mathcal{F}^{-1}[m(x-\frac{1}{2}v)e^{-2x^{2}}](u)e^{|u|^2\over 2}.
$$
Notice that 
$$
	\int_{\mathbb{R}^{n}} (1+|\xi|)^{2s}|\mathcal{F}^{-1} f(\xi)|^2 d\xi\leq C\|f\|_{\mathcal{W}^{s,2}}^2\leq C\|f\|_{W_H^{s,2}(\RR)}^2,
$$
and from  \eqref{ppp},
\begin{align*}
	&\|m(x-\frac{1}{2}v)e^{-2x^{2}}\|_{W_H^{s,2}(\RR)}^2 \\
	&\leq \big(1+{|v|\over2}\big)^{2s}\|m(x)e^{-2(x+\frac12 v)^{2}}\|_{W_H^{s,2}(\RR)}^2\\
&\leq \|m\|_{\mathcal{M}(W_H^{s,2}(\RR))}^2 \big(1+{|v|\over2}\big)^{2s} \|e^{-2(x+\frac12 v)^{2}}\|_{W_H^{s,2}(\RR)}^2 \\
&\leq \|m\|_{\mathcal{M}(W_H^{s,2}(\RR))}^2 \big(1+{|v|\over2}\big)^{4s} \|e^{-2x^{2}}\|_{W_H^{s,2}(\RR)}^2. 
\end{align*}
By Lemma~\ref{lem3}, we have
\begin{align*}
	\|\varphi \|_{F^{s,2}(\CC)}^{2}
&\leq C \int_{\mathbb{C}^{n}}(1+|z|)^{2s}|\varphi (z)|^{2}e^{-|z|^{2}}dz\\
&\leq C \int_{\mathbb{R}^{n}}\int_{\mathbb{R}^{n}}(1+|u|)^{2s}(1+|v|)^{2s}|\varphi (u+iv)|^2
e^{-(|u|^{2}+|v|^{2})}dudv\\
&\leq C \int_{\mathbb{R}^{n}} (1+|v|)^{2s}e^{-|v|^2}\\
&\qquad \int_{\mathbb{R}^{n}} (1+|u|)^{2s}|\mathcal{F}^{-1}[m(x-\frac{1}{2}v)e^{-2x^{2}}](u)|^2 dudv\\
&\leq C \int_{\mathbb{R}^{n}} (1+|v|)^{2s}e^{-|v|^2} \|m(x-\frac{1}{2}v)e^{-2x^{2}}\|_{W_H^{s,2}(\RR)}^2 dv\\
&\leq C \|m\|_{\mathcal{M}(W_H^{s,2}(\RR))}^2 \int_{\mathbb{R}^{n}} (1+|v|)^{2s}\big(1+{|v|\over2}\big)^{4s} e^{-|v|^2} dv \\
&\leq C \|m\|_{\mathcal{M}(W_H^{s,2}(\RR))}^2.
\end{align*}
This proves  $\varphi \in F^{s,2}(\CC)$.  The proof  of Proposition~\ref{thm18} is complete.
\end{proof}

Finally, we are ready to prove our Theorem~\ref{th1.1}.

\begin{proof}[Proof of Theorem~\ref{th1.1}]
First, we assume that 
$$
	\varphi (z)= \left({2\over \pi}\right)^{n\over 2} \int_{\mathbb{R}^{n}}m(x)e^{-2(x-\frac{i}{2}z)^{2}}dx, \qquad z\in \mathbb{C}^{n}
$$
where $m$ is a multiplier on the space $ W_{H}^{s,2}(\RR), s\geq 0$.
Let  $S_\varphi$ be  an integral operator as in \eqref{e1.1}. To prove  that  $S_\varphi$
 is bounded on the space $F^{s,2}(\CC),$
we notice that   from Proposition~\ref{thm18},   $\varphi\in F^{s,2}(\CC)$ and
$$S_\varphi= \mathcal{B}T\mathcal{B}^{-1},
$$
where $T$ is given by 
$$
\mathcal{F}(Tf)(x) = m(x)\mathcal{F}(f)(x)\hskip 4mm \text{for all} \hskip 4mmf\in W^{s,2}_H(\RR).
$$
By Lemma~\ref{lem5},   the operator  $T$ is bounded on the space 
$W^{s,2}_H(\RR)$.  From the properties  of the operators $\mathcal{B}$ and
$\mathcal{B}^{-1}$,  we see that $S_\varphi$ is bounded on the space $F^{s,2}(\CC).$

Conversely, let $S_\varphi$ be a bounded  operator on $F^{s,2}(\CC)$  as  in \eqref{e1.1} . Then 
from the properties  of the operators $\mathcal{B}$ and
$\mathcal{B}^{-1}$, we have that 
$$T=\mathcal{B}^{-1}S_{\varphi }\mathcal{B}
$$
is bounded on $W_{H}^{s,2}(\RR)$. Note that $S_{\varphi }W_{a}f=W_{a}S_{\varphi }f$ for any $a\in \mathbb{R}^{n}$ and $f\in F^{s,2}(\CC)$. 
It follows that $T\tau_{a}f=\tau_{a}Tf$ for any $a\in\mathbb{R}^{n}$ and $f\in W_{H}^{s,2}(\RR)$.
Thus by Proposition~\ref{thm16}, there is an $ m\in \mathcal{M}(W_{H}^{s,2}(\RR))$ such that $\mathcal{F}(Tf)=m\mathcal{F}f$. 
This implies 
$$S_\varphi=\mathcal{B} \mathcal{F}^{-1} M_{m} \mathcal{F} \mathcal{B}^{-1}.
$$ 
By Proposition~\ref{thm18}, $\mathcal{B} \mathcal{F}^{-1} M_{m} \mathcal{F} \mathcal{B}^{-1}$ 
is  an integral operator $S_{\varphi_0}$, where
$$
	\varphi_{0}(z)=\left({2\over \pi}\right)^{n\over2}\int_{\mathbb{R}} m(x)e^{-2(x-\frac{i}{2}z)^2} dx\in F^{s,2}(\CC).
$$
This implies for all $g\in F^{s,2}(\C^n)$, there holds
$$
	\int_{\mathbb{C}^{n}} g(w)e^{z\cdot \bar{w}}(\varphi (z-\bar{w}) -\varphi_{0}(z-\bar{w}))d\lambda(w)=0,\qquad z\in\mathbb{C}^{n}.
$$

To finish the proof,  it suffices  to show that $\varphi =\varphi_{0}$. Taking  $z=0$ in the above equality,
 we see that for all $g\in  F^{s,2}(\mathbb{C}^{n})$,
$$
	\int_{\mathbb{C}^n} g(w) (\varphi(-\bar{w})-\varphi_0(-\bar{w}))\, d\lambda(w) =0.
$$
Write $\psi(w)=\varphi(-w)-\varphi_0(-w)\in F^{2}(\mathbb{C}^{n})$. Then $\psi$ has the series expansion
$$
	\psi(w)=\sum\limits_{\alpha} c_{\alpha} e_{\alpha} (w)=\sum\limits_{\alpha} c_{\alpha}
\left({1\over \alpha! }\right)^{1\over2}\, w^{\alpha }
$$
with  $\sum_{\alpha} |c_{\alpha}|^2 = \|\psi\|^2_{F^2(\mathbb{C}^{n})}$. Letting $g=e_\alpha$ for all $\alpha\in\mathbb{N}_{0}^{n}$,
we obtain
\begin{eqnarray*}
	c_{\alpha }&= &\int_{\mathbb{C}^n} e_{\alpha }(w)\psi({\bar w} ) d\lambda(w)\\
&= &\int_{\mathbb{C}^n} e_{\alpha }(w) (\varphi(-\bar{w})-\varphi_0(-\bar{w})) d\lambda(w)=0.
\end{eqnarray*}
This shows $\varphi=\varphi_0$. Hence, the proof of Theorem~\ref{th1.1} is complete.
\end{proof}

 \medskip

 \noindent
{\bf Acknowledgments.}
The authors would like to thank L. Yan and K. Zhu for helpful discussions.
 G.F. Cao was supported by NNSF of China (Grant Number 12071155).
L. He  was supported by NNSF of China (Grant Number  11871170). 
J. Li is supported by the Australian Research Council (ARC) through the
research grant DP170101060.

\end{document}